\theoremstyle{remark}
\newtheorem{remark}{Remark}
\newtheorem{example}{Example}
\newtheorem{theorem}{Theorem}
\newcommand{\bpsi}{\mbox{\boldmath$\psi$\unboldmath}}
\newcommand{\blambda}{\mbox{\boldmath$\lambda$\unboldmath}}
\newcommand{\bzeta}{\mbox{\boldmath$\zeta$\unboldmath}}
\newcommand{\bmu}{\mbox{\boldmath$\mu$\unboldmath}}
\newcommand{\bell}{\mbox{\boldmath$\ell$\unboldmath}}
\begin{document}

 \begin{center}
   {\sf ~\\[14pt]
{\Large {\bf Boundary Element Procedure for 3D Electromagnetic Transmission Problems with Large Conductivity}}}
 \end{center}


 \footnotesize{
   \begin{center}
     M. Maischak$^1$, Z. Nezhi$^2$, J. E. Ospino$^3$, E.~P.~Stephan$^2$ \\[14pt]

     $^1$Department of Mathematics Sciences, Brunel University, U.K. \\[3mm]
     $^2$Institute for Applied Mathematics, Leibniz University of Hannover, Hannover, Germany \\[3mm]
     $^3$Departamento de Matem\'{a}ticas y Estad\'{\i}stica, Fundaci\'{o}n Universidad del Norte, Barranquilla, Colombia.  \\[3mm]
     e-mail: jospino@uninorte.edu.co

   \end{center}
   }

 \normalsize
 \noindent
\begin{center}
 {\Large{\bf Abstract:}}\\
\end{center}
We consider the scattering of time periodic electro-magnetic fields by metallic obstacles,
the eddy current problem. In this interface problem different sets of Maxwell equations
must be solved in the obstacle and outside, while the tangential components of both
electric and magnetic fields are continuous across the interface. We
describe an asymptotic procedure, which applies for large conductivity and
reflects the skin effect in metals. The key to our method is to introduce a special
integral equation procedure for the exterior boundary value problem
corresponding to perfect conductors. The asymptotic procedure
leads to a great reduction in complexity for the numerical solution since it involves solving
only the exterior boundary value problem. Furthermore we introduce a new fem/bem coupling procedure for the transmission problem and consider the implementation of the Galerkin elements for the perfect
conductor problem and present numerical experiments.\\

{\bf Key words}: Boundary element; asymptotic expansion; skin effect.\\
\vspace{0.2cm}


\section{Introduction}
\label{sec:s0}
\vspace{-2pt}
We present asymptotic expansions with respect to inverse powers of conductivity
for the electrical and magnetical fields and report the algorithm of MacCamy
and Stephan \cite{MacCamyS} which allows to compute the expansion terms of the electrical field
in the exterior domain by solving sucessively only exterior problems (so-called perfect
conductor problems) with different data on the interface between conductor (metal) and
isolator (air). We solve these exterior problems numerically by applying the Galerkin
boundary element method to first kind boundary integral equations which were originally
introduced by MacCamy and Stephan in \cite{MacCamyP}. This system of integral equations
on the interface $\Sigma$ results from a single layer potential ansatz for the electrical field
and has unknown densities namely a vector field and a scalar function on $\Sigma$ which we
approximate with lower order Raviart Thomas elements and continous piecewise linear
functions on a regular, triangular mesh on $\Sigma$ As in the two dimensional case, investigated
by Hariharan \cite{Hariharan,Hariharan1} and MacCamy and Stephan \cite{MacCamyE}, the asymptotic procedure
gives for the computation of the solution of the transmission problem a great reduction
in complexity since it involves solving only the exterior problem and furthermore only
a few expansion terms must be computed. We describe in detail how to implement the boundary element method
for the perfect conductor problem. As an alternative to the asymptotic expansions for
the solution of the transmission problem we introduce a new finite element/boundary element
Galerkin coupling procedure which converges quasi-optimally in the energy norm.

\vspace{-6pt}

\section{Asymptotic expansion for large conductivity and skin effect}
\label{sec:s1}
\vspace{-2pt}

Let $\Omega_{-}$ be a bounded region in $\mathbb{R}^{3}$ representing a metallic conductor and
$\Omega_{+}:=\mathbb{R}^{3}\backslash  \overline{\Omega_{-}}$.
$\Omega_{+}$ representing air. The parameters $\varepsilon_{0}$,
$\mu_{0}$, $\sigma_{0}$ denote permittivity, permeability and conductivity. is assumed to
have zero conductivity in $\Omega_{+}$ with $\varepsilon$, $\mu$, $\sigma$ in $\Omega_{-}$. Let the incident
electric and magnetic fields, $\textbf{E}^{0}$ and $\textbf{H}^{0}$,
satisfy Maxwell's equations in air. The total fields $\textbf{E}$
and $\textbf{H}$ satisfy the same Maxwell's equations as
$\textbf{E}^{0}$ and $\textbf{H}^{0}$ in $\Omega_{+}$ but a
different set of equations in $\Omega_{-}$. Across the interface $\Sigma:=\partial
\Omega_{-}=\partial \Omega_{+}$, which is assumed to be a regular analytic surface, the tangential components of both
$\textbf{E}$ and $\textbf{H}$ are continuous. $\textbf{E}-\textbf{E}^{0}$ and $\textbf{H}-\textbf{H}^{0}$ represent the scattered fields. All fields are time-harmonic
with frequency $\omega$. As in \cite{MacCamyS} we neglect conduction (displacement) currents in air (metal). Then, with appropriate scaling, the eddy current problem is (see \cite{Stratton,Weggler}).\\

\textit{Problem $(\textbf{P}_{\alpha \beta})$}: Given $\alpha>0$ and
$\beta>0$, find $\textbf{E}$ and $\textbf{H}$ such that;
\begin{equation}\label{s1}
\begin{array}{lll}
    \mbox{curl}\hspace{0.1cm}\textbf{E}=\textbf{H}, & \mbox{curl}\hspace{0.1cm}\textbf{H}=\alpha^{2}\textbf{E} &  \mbox{in}\hspace{0.3cm} \Omega_{+}\hspace{0.3cm}\mbox{(air)}\\\\
    \mbox{curl}\hspace{0.1cm}\textbf{E}=\textbf{H}, & \mbox{curl}\hspace{0.1cm}\textbf{H}=i\beta^{2}\textbf{E} &  \mbox{in}\hspace{0.3cm} \Omega_{-}\hspace{0.3cm}\mbox{(metal)}\\\\
    \textbf{E}_{T}^{+}=\textbf{E}_{T}^{-}, & \textbf{H}_{T}^{+}=\textbf{H}_{T}^{-}, &
    \mbox{on}\hspace{0.3cm} \Sigma.
  \end{array}
\end{equation}
$$\dfrac{\partial}{\partial r}\textbf{E}(\textbf{x})-i\alpha\textbf{E}(\textbf{x})=O\left( \dfrac{1}{r^{2}}\right)\hspace{0.3cm}\mbox{with}\hspace{0.3cm}r=|\textbf{x}|,\hspace{0.3cm}\mbox{as}\hspace{0.3cm}|\textbf{x}|\rightarrow\infty. $$
Here $\alpha^{2}=\omega^{2}\mu_{0}\varepsilon_{0}$ and
$\beta^{2}=\omega\mu \sigma-i\omega^{2}\mu \varepsilon$ are
dimensionless parameters, and $\beta^{2}=\omega\mu \sigma>0$ if displacement currents are neglected in metal $(\varepsilon=0)$. The
subscript $T$ denotes tangential component and the superscripts plus
and minus denote limits from $\Omega_{+}$ and $\Omega_{-}$.\\
At higher frequencies the constant $\beta$ is usually large leading to the \textit{perfect conductor approximation}. Formally this
means solving only the $\Omega_{+}$ equation and requiring that
$\textbf{E}_{T}=0$ on $\Sigma$. If we let $\textbf{E}$ and
$\textbf{H}$ denote the scattered
fields, we obtain\\

\textit{Problem $(\textbf{P}_{\alpha \infty})$}: Given $\alpha>0$,
find $\textbf{E}$ and $\textbf{H}$ such that;
\begin{equation}\label{s2}
\begin{array}{rrr}
     \mbox{curl}\hspace{0.1cm}\textbf{E}=\textbf{H}, & \mbox{curl}\hspace{0.1cm}\textbf{H}=\alpha^{2}\textbf{E} &  \mbox{in}\hspace{0.3cm} \Omega_{+}\\\\
     & \textbf{E}_{T}=-\textbf{E}_{T}^{0}, & \mbox{on}\hspace{0.3cm} \Sigma.
  \end{array}
\end{equation}
\begin{remark}\label{rem1}
There exists at most one solution of problem $(\textbf{P}_{\alpha
\beta})$ for any $\alpha>0$ and $0<\beta\leq \infty$ (see \cite{Muller}).
\end{remark}
\begin{remark}
There exists a sequence $\{\alpha_{k}\}_{k=1}^{\infty}$, such that if $\alpha\neq\alpha_{k}$ then $\mbox{curl}\hspace{0.1cm}\textbf{E}=\textbf{H}$, $\mbox{curl}\hspace{0.1cm}\textbf{H}=\alpha^{2}\textbf{E}$ in $\Omega_{+}$, $\textbf{E}_{T}\equiv 0$ on $\Sigma$ implies $\textbf{E}\equiv\textbf{H}\equiv 0$ in $\Omega_{+}$.
\end{remark}

We are interesting in an asymptotic expansion of the solution of problem $(\textbf{P}_{\alpha \beta})$ with respect to inverse powers of conductivity. With $\tau$ denoting the
distance from $\Sigma$ measured into $\Omega_{-}$ along the normal
to $\Sigma$ the expansions reads:
\begin{equation}\label{s3}
\textbf{E}\sim
\textbf{E}^{0}+\sum_{n=0}^{\infty}\textbf{E}_{n}\beta^{-n}
\hspace{0.3cm} \mbox{in}\hspace{0.3cm} \Omega_{+}
\end{equation}
\begin{equation}\label{s4}
\textbf{H}\sim
\textbf{H}^{0}+\sum_{n=0}^{\infty}\textbf{H}_{n}\beta^{-n}
\hspace{0.3cm} \mbox{in}\hspace{0.3cm} \Omega_{+}
\end{equation}
\begin{equation}\label{s5}
\textbf{E}\sim e^{-\sqrt{-i}\beta
\tau}\sum_{n=0}^{\infty}\textbf{E}_{n}\beta^{-n} \hspace{0.3cm}
\mbox{in}\hspace{0.3cm} \Omega_{-}
\end{equation}
\begin{equation}\label{s6}
\textbf{H}\sim e^{-\sqrt{-i}\beta
\tau}\sum_{n=0}^{\infty}\textbf{H}_{n}\beta^{-n} \hspace{0.3cm}
\mbox{in}\hspace{0.3cm} \Omega_{-}
\end{equation}
Here $\textbf{E}_{n}$ and $\textbf{H}_{n}$ are
independent of $\beta$ which is proportional to $\sqrt{\sigma}$. The exponential in (\ref{s5}) and (\ref{s6})
represents \textit{the skin effect}. Next we present from \cite{MacCamyS} these expansions for the half-space case where the various coefficients can be computed recursively. Note
$\textbf{E}_{0}$ and $\textbf{H}_{0}$ in (\ref{s3}) and (\ref{s4})
is simply the perfect conductor approximation, that is, the solution
of $(\textbf{P}_{\alpha \infty})$. $\textbf{E}_{n}$ and $\textbf{H}_{n}$ in (\ref{s3}) and (\ref{s4}) can be calculated successively by solving a sequence of problems of the same form as
$(\textbf{P}_{\alpha \infty})$ but with boundary values determined
from earlier coefficients. The $\textbf{E}_{n}$ and $\textbf{H}_{n}$
in (\ref{s5}) and (\ref{s6}) are obtained by solving ordinary
differential equations in the variable $x_{3}$.\\
For the ease of the reader we present here for the half-space case $\Omega_{+}=\mathbb{R}^{3}_{+}$ i.e. $x_{3}>0$ and $\Omega_{-}=\mathbb{R}^{3}_{-}$ i.e. $x_{3}<0$ a formal procedure to compute $\textbf{E}_{n}$, $\textbf{H}_{n}$ which was given by MacCamy and Stephan \cite{MacCamyS}. They substitute in (\ref{s3})-(\ref{s6}) into $(\textbf{P}_{\alpha \beta})$ for $\Sigma=\mathbb{R}^{2}$ and equate coefficients of $\beta^{-n}$. Here we give a short description of their approach.\\
Let $\chi=e^{\sqrt{-i}\beta x_{3}}$ and decompose fields $\textbf{F}$ into tangential and normal components
\begin{equation}\label{s13}
\textbf{F}=\mathfrak{F}+f\textbf{e}_{3},\hspace{0.3cm}
\mathfrak{F}=\mathcal{F}^{1}\textbf{e}_{1}+\mathcal{F}^{2}\textbf{e}_{2},
\end{equation}
with orthogonal component $\mathfrak{F}^{\bot}=\textbf{e}_{3}\times \mathfrak{F}$, and unit vectors $\textbf{e}_{i}$ ($i=1,2,3$).\\
Then one computes with the surface gradient $grad_{T}$ for the rotation
\begin{equation}\label{s17}
\mbox{curl}\hspace{0.1cm}\textbf{F}=\mathfrak{F}^{\bot}_{x_{3}}-(\mbox{grad}_{T}\hspace{0.1cm}f)^{\bot}-(\mbox{div}\hspace{0.1cm}\mathfrak{F}^{\bot})\textbf{e}_{3}
\end{equation}
and
\begin{equation}\label{s18}
\mbox{curl}(\chi \textbf{F})=\chi[\sqrt{-i}\beta
\mathfrak{F}^{\bot}+\mathfrak{F}^{\bot}_{x_{3}}-(\mbox{grad}_{T}\hspace{0.1cm}f)^{\bot}-(\mbox{div}\hspace{0.1cm}\mathfrak{F}^{\bot})\textbf{e}_{3}].
\end{equation}
Now setting $\textbf{E}_{n}=\mathcal{E}_{n}+\ell_{n}\textbf{e}_{3}$ one obtains for $x_{3}<0$
\begin{equation}\label{s20}
\mbox{curl}\hspace{0.1cm}\textbf{E}\sim\chi\{\sqrt{-i}\beta
\mathcal{E}^{\bot}_{0}+\sum_{n=0}^{\infty}[\sqrt{-i}\mathcal{E}^{\bot}_{n+1}+\mathcal{E}^{\bot}_{n,x_{3}}-(\mbox{grad}_{T}\hspace{0.1cm}\ell_{n})^{\bot}-(\mbox{div}\hspace{0.1cm}\mathcal{E}^{\bot}_{n})\textbf{e}_{3}]\beta^{-n}\},
\end{equation}
and
\begin{equation}\label{s21}
\begin{aligned}
\mbox{curl}\hspace{0.1cm}\mbox{curl}\hspace{0.1cm}\textbf{E}&\sim\chi\left\lbrace i\beta^{2}\mathcal{E}_{0}-\sqrt{-i}\beta
\mathcal{E}_{0,x_{3}}+\sqrt{-i}\beta\mbox{div}\hspace{0.1cm}\mathcal{E}_{0}\textbf{e}_{3}+\sum_{n=0}^{\infty}\left[ i\beta\mathcal{E}_{n+1}-\sqrt{-i}\mathcal{E}_{n+1,x_{3}}\right. \right. \\\\
&-\sqrt{-i}\mbox{div}\hspace{0.1cm}\mathcal{E}_{n+1}\textbf{e}_{3}-\sqrt{-i}\beta\mathcal{E}_{n,x_{3}} -\mathcal{E}_{n,x_{3},x_{3}}+\mbox{div}\hspace{0.1cm}\mathcal{E}_{n,x_{3}}\textbf{e}_{3}+\sqrt{-i}\beta\mbox{grad}\hspace{0.1cm}\ell_{n}\\\\
&\left.\left.  +(\mbox{grad}_{T}\hspace{0.1cm}\ell_{n})_{x_{3}}+\mbox{div}\hspace{0.1cm}\mbox{grad}\hspace{0.1cm}\ell_{n}\textbf{e}_{3}\right] \beta^{-n}+\mbox{grad}\hspace{0.1cm}\mbox{div}\hspace{0.1cm}\beta^{-n}\textbf{e}_{3}\right\rbrace \\\\
&=\chi[i\beta^{2}\mathcal{E}_{0}+i\beta^{2}\ell_{0}\textbf{e}_{3}+i\beta\mathcal{E}_{1}+i\beta
\ell_{1}\textbf{e}_{3}+\sum_{n=0}^{\infty}(i\mathcal{E}_{n+2}+i\ell_{n+2}\textbf{e}_{3})\beta^{-n}]\sim i\beta^{2}\textbf{E}.
\end{aligned}
\end{equation}
Hence, equating coefficients of $\beta^{2}$ and $\beta$, respectively yields $\ell_{0}\equiv 0$, $i\ell_{1}=\sqrt{-i}\mbox{div}\hspace{0.1cm}\mathcal{E}_{0}$ and $\mathcal{E}_{0,x_{3}}=0$ implying $\mathcal{E}_{0}(x_{1},x_{2},x_{3})=\mathcal{E}_{0}(x_{1},x_{2},0)$.\\
As coefficients of $\beta^{0}$ one obtains
$$-\sqrt{-i}\mathcal{E}_{1,x_{3}}+\sqrt{-i}\mbox{grad}\hspace{0.1cm}\ell_{1}=0,$$
$$\sqrt{-i}\mbox{div}\hspace{0.1cm}\mathcal{E}_{1}+\mbox{div}\hspace{0.1cm}\mathcal{E}_{0,x_{3}}-\mbox{grad}\hspace{0.1cm}\mbox{div}\hspace{0.1cm}\mathcal{E}_{0}=i\ell_{2}.$$
Now the gauge condition $\mbox{div}\hspace{0.1cm}\mathcal{E}_{0}=0$  implies $\ell_{1}\equiv 0$ and $\mbox{div}\hspace{0.1cm}\mathcal{E}_{0,x_{3}}=0$, hence $\mathcal{E}_{1,x_{3}}=0$ and $\sqrt{-i}\mbox{div}\hspace{0.1cm}\mathcal{E}_{1}=i\ell_{2}.$\\
Thus $\mathcal{E}_{1}(x_{1},x_{2},x_{3})=\mathcal{E}_{1}(x_{1},x_{2},0)$.\\
Equating coefficients of $\beta^{-1}$ in (\ref{s21}) gives
$$-\sqrt{-i}\mathcal{E}_{2,x_{3}}-\sqrt{-i}\mathcal{E}_{2,x_{3}}+\sqrt{-i}\mbox{grad}\hspace{0.1cm}\ell_{2}=0,$$
$$\sqrt{-i}\mbox{div}\hspace{0.1cm}\mathcal{E}_{2}-\mbox{grad}\hspace{0.1cm}\mbox{div}\hspace{0.1cm}\mathcal{E}_{1}=i\ell_{3}.$$
Setting
\begin{equation}\label{s22}
\textbf{H}=\chi\sum_{n=0}^{\infty}(\mathcal{H}_{n}+h_{n}\textbf{e}_{3})\beta^{-n}
\end{equation}
MacCamy and Stephan obtain in \cite{MacCamyS} with $\ell_{1}=0$, $h_{0}=0$
$\mathcal{E}_{0}=0$:\\
\begin{equation}
\sqrt{-i}\mathcal{E}_{1}^{\bot}+\mathcal{E}_{0,x_{3}}^{\bot}=\mathcal{H}_{0},\hspace{0.2cm}\sqrt{-i}\mathcal{H}_{0}^{\bot}=i\mathcal{E}_{1},\hspace{0.2cm}h_{0}=\mbox{div}\hspace{0.1cm}\mathcal{E}_{0}^{\bot}=0.
\end{equation}
and
\begin{equation}\label{s27}
 \sqrt{-i}\mathcal{E}_{2}^{\bot}+\mathcal{E}_{1,x_{3}}^{\bot}=\mathcal{H}_{1},\hspace{0.3cm}
 \sqrt{-i}\mathcal{H}_{1}^{\bot}+\mathcal{H}_{0,x_{3}}^{\bot}=i\mathcal{E}_{2}
\end{equation}
\begin{equation}\label{s28}
 h_{1}=-\mbox{div}\hspace{0.1cm}\mathcal{E}_{1}^{\bot},\hspace{0.3cm}
 -\mbox{div}\hspace{0.1cm}\mathcal{H}_{0}^{\bot}=i\ell_{2}.
\end{equation}
and
\begin{equation}\label{s29}
\begin{array}{lcl}
   \mathcal{H}_{0,x_{3}}\equiv \mathcal{E}_{1,x_{3}}\equiv 0 \\\\
  \mathcal{H}_{0}\equiv \sqrt{-i}\mathcal{E}_{1}^{\bot} & \mbox{in}
  & x_{3}<0
\end{array}
\end{equation}
For $x_{3}>0$, we have with $\mbox{curl}\hspace{0.1cm}\textbf{E}=\textbf{H}$ yields
$$\mbox{curl}\hspace{0.1cm}\textbf{E}^{0}+\sum_{n=0}^{\infty}\mbox{curl}\hspace{0.1cm}\textbf{E}_{n}\beta^{-n}=\textbf{H}^{0}+\sum_{n=0}^{\infty}\textbf{H}_{n}\beta^{-n}$$
Equating coefficients of $\beta^{-n}$ one finds in $x_{3}>0$
$$\mbox{curl}\hspace{0.1cm}\textbf{E}^{0}=\textbf{H}^{0},\hspace{0.3cm}
 \mbox{curl}\hspace{0.1cm}\textbf{E}_{n}=\textbf{H}_{n},\hspace{0.2cm}n\geq
0,$$ (and correspondlying due to $\mbox{curl}\hspace{0.1cm}\textbf{H}=\alpha^{2}\textbf{E}$)
$$\mbox{curl}\hspace{0.1cm}\textbf{H}^{0}=\alpha^{2}\textbf{E}^{0},\hspace{0.3cm}
\mbox{curl}\hspace{0.1cm}\textbf{H}_{n}=\alpha^{2}\textbf{E}_{n},\hspace{0.2cm}n\geq
 0.$$
With the above relations the recursion process goes as follows. First one use (6.10) for
$n=0$ and (6.13), in \cite{MacCamyS}, to conclude that
$$
\begin{array}{cccc}
  \mbox{curl}\hspace{0.1cm}\textbf{E}_{0}=\textbf{H}_{0}, & \mbox{curl}\hspace{0.1cm}\textbf{H}_{0}=\alpha^{2}\textbf{E}_{0} & \mbox{in} & x_{3}>0 \\\\
  \textbf{E}_{0}^{+}=-(\textbf{E}^{0}_{T})^{-}, &  \mbox{on} & x_{3}=0. &
\end{array}
$$
Now $(\textbf{E}_{0},\textbf{H}_{0})$ is just the solution of
$(\textbf{P}_{\alpha\infty})$ which we can solve by the boundary integral equation procedure introduce in MacCamy and Stephan and revisited belov. But from
$(\ref{s1})_{3}$ we obtain
\begin{equation}\label{s30}
\mathcal{H}_{0}^{-}=\mathcal{H}_{0}^{+}=(\textbf{H}_{0})_{T}^{+}\hspace{0.3cm}\mbox{on}\hspace{0.3cm}x_{3}=0.
\end{equation}
Now the right side of (\ref{s30}) is known and easily computed. Then $(\ref{s1})_{3}$ and (\ref{s30}) yield
\begin{equation}\label{s31}
(\textbf{E}_{1})_{T}^{+}=(\textbf{E}_{1})_{T}^{-}=\mathcal{E}_{1}^{-}=-\sqrt{i}(\mathcal{H}_{0}^{\bot})^{-}=-\sqrt{i}((\textbf{H}_{0})_{T}^{+})^{\bot}.
\end{equation}
Therefore by (6.10), in \cite{MacCamyS}, we have a new again solvable problem for
$(\textbf{E}_{1},\textbf{H}_{1})$ which is just like
$(\textbf{P}_{\alpha\infty})$, that is
$$\mbox{curl}\hspace{0.1cm}\textbf{E}_{1}=\textbf{H}_{1},\hspace{0.3cm}\mbox{curl}\hspace{0.1cm}\textbf{H}_{1}=\alpha^{2}\textbf{E}_{1}\hspace{0.3cm}\mbox{in}\hspace{0.3cm}x_{3}>0,$$
but with new boundary values for $\textbf{E}_{T}$ as given by
(\ref{s31}).\\
For the complete algorithm see \cite{MacCamyS}. Note, with $\lambda=\sqrt{-i}$ we have $\mathcal{E}_{1}^{-}(x_{1},x_{2},0)=-\dfrac{1}{\lambda}(\textbf{n}\times\mbox{curl}\hspace{0.1cm}\textbf{E}_{0})$ yielding in $x_{3}<0$
$$\textbf{E}_{1}(x_{1},x_{2},x_{3})=\int_{0}^{-\tau}e^{\lambda\beta\widetilde{x}_{3}}\mathcal{E}_{1}^{-}(x_{1},x_{2},0)d\widetilde{x}_{3}=-\dfrac{1}{\lambda^{2}\beta}(\textbf{n}\times\mbox{curl}\hspace{0.1cm}\textbf{E}_{0})[e^{-\lambda\beta\tau}-1]$$

A comparison with Peron's results (see Chapter 5 in \cite{Peron}) shows that $\textbf{W}_{j}^{cd}(y_{\alpha},h_{\rho})=e^{-\sqrt{-i}\beta\tau}\textbf{E}_{j}$, $j\geq 0$, in $\Omega^{cd}$, $\lambda Y_{3}=\sqrt{-i}\beta\tau$ and $w_{j}=\ell_{j}$. Furthermore we see that the first terms in the asymptotic expansion of the electrical field for a smooth surface $\Sigma$ derived by Peron coincide with those for the half-space $x_{3}=0$ investigated by MacCamy and Stephan, namely $\ell_{0}=w_{0}=0$, $\ell_{1}=w_{1}=0$, $\mathcal{E}_{0}=\textbf{W}_{0}^{cd}=0$.
\begin{remark}
Since due Theorem 5 in Chapter 3 of \cite{Ospino} there exists only one solution of the electromagnetic transmission problem for a smooth interface this solution can be compute by the boundary integral equation procedure below, when we assume that (\ref{s47}) holds. Then for the electrical field $\textbf{E}$ obtained via the boundary integral equation system we have that in the tubular region $\Omega_{\pm}(\delta)=\left\lbrace x\in\Omega_{\pm},\mbox{dist}(x,\Sigma)<\delta\right\rbrace $ there holds for the remainders $\textbf{E}_{m}^{is(cd)}$ obtained by truncating (\ref{s3}) and (\ref{s5}) at $n=m$
$$\lVert\textbf{E}_{m,\rho}^{is}\rVert_{\textbf{W}(\mbox{curl},\Omega^{is})}\leq C_{1}\rho^{-m-1}\hspace{0.2cm}\mbox{and}\hspace{0.2cm}\lVert\textbf{E}_{m,\rho}^{cd}\rVert\leq C_{2}e^{C_{3}\tau}$$
for constants $C_{1},C_{2},C_{3}>0$, independent of $\rho$.
\end{remark}

\section{A boundary integral equation method of the first kind}
\label{sec:s2}

Next we describe the integral equation procedure for
$(\textbf{P}_{\alpha \beta})$ and $(\textbf{P}_{\alpha \infty})$ from \cite{MacCamyS,Weggler}.\\
Throughout the section we require that
\begin{equation}\label{s47}
\alpha\neq\alpha_{k},\hspace{0.3cm}k=1,2,\ldots
\end{equation}
This methods, like others, are based on the Stratton-Chu formulas from \cite{Stratton}. To describe these we need some notation. We will let $\textbf{n}$ denote the exterior normal to $\Sigma$. Given any vector field $\textbf{v}$ defined on $\Sigma$ we have
\begin{equation}\label{s48}
\textbf{v}=\textbf{v}_{T}+v_{N}\textbf{n},\hspace{0.3cm}\textbf{v}_{T}=\textbf{n}\times(\textbf{v}\times\textbf{n})
\end{equation}
where $\textbf{v}_{T}$, which lies in the tangent plane, is the tangential component of $\textbf{v}$.\\

We define the simple layer potential $\mathcal{V}_{\kappa}$ for density $\psi$ (correspondingly for a vector field) for the surface $\Sigma$ by
\begin{equation}\label{s50}
\mathcal{V}_{\kappa}(\psi)=\int_{\Sigma}\psi(\textbf{y})G_{\kappa}(|\textbf{x}-\textbf{y}|)ds_{y},G_{\kappa}(r)=\dfrac{e^{i\kappa r}}{4\pi r}.
\end{equation}
For a vector field $\textbf{v}$ on $\Sigma$ we define $\mathcal{V}_{\kappa}(\textbf{v})$ by (\ref{s50}) with $\textbf{v}$ replacing $\psi$.\\
We collect in the following lemma some of the well-known results about the simple layer potential $\mathcal{V}_{\kappa}$.
\begin{remark}\label{slem1}
\cite[Lemma 2.1]{MacCamyS} For any complex $\kappa$, $0\leq\mbox{arg}\kappa\leq\dfrac{\pi}{2}$ and any continuous $\psi$ on $\Sigma$; there holds:
\begin{itemize}
 \item[(i)] $\mathcal{V}_{\kappa}(\psi)$ is continuous in $\mathbb{R}^{3}$,
 \item[(ii)] $\Delta\mathcal{V}_{\kappa}(\psi)=-\kappa^{2}\mathcal{V}_{\kappa}(\psi)$ in $\Omega_{-}\cup\Omega_{+}$,
 \item[(iii)] $\mathcal{V}_{\kappa}(\psi)(\textbf{x})=O\left(\dfrac{e^{i\kappa |\textbf{x}|}}{|\textbf{x}|} \right) $ as $|\textbf{x}|\rightarrow\infty$,
 \item[(iv)]
$$\left( \dfrac{\partial\mathcal{V}_{\kappa}(\psi)}{\partial\textbf{n}}(\textbf{x})\right)^{\pm}=\mp\dfrac{1}{2}\psi(\textbf{x})+\int_{\Sigma}K_{\kappa}(\textbf{x},\textbf{y})\psi(\textbf{y})ds_{y},\hspace{0.3cm}\mbox{on}\hspace{0.3cm}\Sigma, $$
where $K_{\kappa}(\textbf{x},\textbf{y})=O(|\textbf{x}-\textbf{y}|^{-1})$ as $\textbf{y}\rightarrow\textbf{x}$.
 \item[(v)] $$(\textbf{n}\times\mbox{curl}\hspace{0.1cm}\mathcal{V}_{\kappa}(\textbf{v})(\textbf{x}))^{\pm}=\pm\dfrac{1}{2}\textbf{v}(\textbf{x})+\dfrac{1}{2}\int_{\Sigma}\textbf{K}_{\kappa}(\textbf{x},\textbf{y})\textbf{v}(\textbf{y})ds_{y},$$
where the matrix function $\textbf{K}_{\kappa}$ satisfies $\textbf{K}_{\kappa}(\textbf{x},\textbf{y})=O(|\textbf{x}-\textbf{y}|^{-1})$ as $\textbf{y}\rightarrow\textbf{x}$.
\end{itemize}
\end{remark}

For problem $(\ref{s1})_{2}$, in $\Omega_{-}$ the Stratton-Chu formula gives
\begin{equation}\label{s51}
\begin{array}{l}
\textbf{E}=\mathcal{V}_{\sqrt{i}\beta}(\textbf{n}\times\textbf{H})-\mbox{curl}\hspace{0.1cm}\mathcal{V}_{\sqrt{i}\beta}(\textbf{n}\times\textbf{E})+\mbox{grad}\hspace{0.1cm}\mathcal{V}_{\sqrt{i}\beta}(\textbf{n}\cdot\textbf{E}),\\\\
\textbf{H}=\mbox{curl}\hspace{0.1cm}\mathcal{V}_{\sqrt{i}\beta}(\textbf{n}\times\textbf{H})-\mbox{curl}\hspace{0.1cm}\mbox{curl}\hspace{0.1cm}\mathcal{V}_{\sqrt{i}\beta}(\textbf{n}\times\textbf{E}).
\end{array}
\end{equation}
Similarly, for problem $(\ref{s1})_{1}$, in $\Omega_{+}$
\begin{equation}\label{s52}
\begin{array}{l}
\textbf{E}=\mathcal{V}_{\alpha}(\textbf{n}\times\textbf{H})-\mbox{curl}\hspace{0.1cm}\mathcal{V}_{\alpha}(\textbf{n}\times\textbf{E})+\mbox{grad}\hspace{0.1cm}\mathcal{V}_{\alpha}(\textbf{n}\cdot\textbf{E}),\\\\
\textbf{H}=\mbox{curl}\hspace{0.1cm}\mathcal{V}_{\alpha}(\textbf{n}\times\textbf{H})-\mbox{curl}\hspace{0.1cm}\mbox{curl}\hspace{0.1cm}\mathcal{V}_{\alpha}(\textbf{n}\times\textbf{E}).
\end{array}
\end{equation}
For given $\textbf{n}\times\textbf{H}$, $\textbf{n}\times\textbf{E}$ and $\textbf{n}\cdot\textbf{E}$ (\ref{s52}) yield a solution of $(\textbf{P}_{\alpha \infty})$. But we know only $\textbf{n}\times\textbf{E}$. The standard treatment of $(\textbf{P}_{\alpha \infty})$ starts from (\ref{s52}), sets $\textbf{n}\times\textbf{H}=0$ and $\textbf{n}\cdot\textbf{E}=0$ and replaces $-\textbf{n}\times\textbf{E}$ by an unknown tangential field $\textbf{L}$ yielding
\begin{equation}\label{s53}
\textbf{E}=\mbox{curl}\hspace{0.1cm}\mathcal{V}_{\alpha}(\textbf{L}),\hspace{0.3cm}\textbf{H}=\mbox{curl}\hspace{0.1cm}\mbox{curl}\hspace{0.1cm}\mathcal{V}_{\alpha}(\textbf{L}).
\end{equation}
Then the boundary condition yields an integral equation of the second kind for $\textbf{L}$ in the tangent space to $\Sigma$.\\
The method (\ref{s53}) is analogous to solving the Dirichlet problem for the scalar Helmholtz equation with a double layer potential. But having found $\textbf{L}$ it is hard to determine $\textbf{H}_{T}$, or equivalently $\textbf{n}\times\textbf{H}$, on $\Sigma$. Note calculating $\textbf{n}\times\textbf{H}$ on $\Sigma$ involves finding a second normal derivative of $\mathcal{V}_{\alpha}(\textbf{L})$.\\
The method in \cite{MacCamyS} for $(\textbf{P}_{\alpha \infty})$ is analogous to solving the scalar problems with a simple layer potential (see \cite{Hsiao}). MacCamy and Stephan use (\ref{s52}) but this time they set $\textbf{n}\times\textbf{E}=0$ and replace $\textbf{n}\times\textbf{H}$ and $\textbf{n}\cdot\textbf{E}$ by unknowns $\textbf{J}$ and $M$. Thus they take
\begin{equation}\label{s54}
\textbf{E}=\mathcal{V}_{\alpha}(\textbf{J})+\mbox{grad}\hspace{0.1cm}\mathcal{V}_{\alpha}(M),\hspace{0.3cm}\textbf{H}=\mbox{curl}\hspace{0.1cm}\mathcal{V}_{\alpha}(\textbf{J}).
\end{equation}
If they can determine $\textbf{J}$ then in this case they can use Remark \ref{slem1} to determine $\textbf{n}\times\textbf{H}$, hence $\textbf{H}_{T}$ on $\Sigma$.\\
With the surface gradient $\mbox{grad}_{T}\psi=(\mbox{grad}\hspace{0.1cm}\psi)_{T}$ on $\Sigma$, the boundary condition in (\ref{s1}) and (\ref{s54}) imply, by continuity of $\mathcal{V}_{\alpha}$,
$$\textbf{n}\times\textbf{E}=\textbf{n}\times\mathcal{V}_{\alpha}(\textbf{J})+\textbf{n}\times\mbox{grad}\hspace{0.1cm}\mathcal{V}_{\alpha}(M)=-\textbf{n}\times\textbf{E}^{0}$$
or equivalently
\begin{equation}\label{s55}
\mathcal{V}_{\alpha}(\textbf{J})_{T}+\mbox{grad}_{T}\hspace{0.1cm}\mathcal{V}_{\alpha}(M)=-\textbf{E}^{0}_{T}.
\end{equation}
We note that for any field $\textbf{v}$ defined in a neighbourhood of $\Sigma$ one can define the surface divergence $\mbox{div}_{T}$ by
$$\mbox{div}\hspace{0.1cm}\textbf{v}=\mbox{div}_{T}\hspace{0.1cm}\textbf{v}+\dfrac{\partial v}{\partial\textbf{n}}\textbf{n}.$$
As shown in \cite[Lemma 2.3]{MacCamyS},
there holds for any differentiable tangential field $\textbf{v}$,
$\mbox{div}\hspace{0.1cm}\mathcal{V}_{\kappa}(\textbf{v})=\mathcal{V}_{\kappa}(\mbox{div}_{T}\hspace{0.1cm}\textbf{v})\hspace{0.3cm}\mbox{on}\hspace{0.3cm}\Sigma.$\\

Setting $\mbox{div}\textbf{E}=0$ on $\Sigma$ yields therefore with (\ref{s54})
$$0=\mbox{div}\hspace{0.1cm}\textbf{E}=\mbox{div}\hspace{0.1cm}\mathcal{V}_{\alpha}(\textbf{J})+\mbox{div}\hspace{0.1cm}\mbox{grad}\hspace{0.1cm}\mathcal{V}_{\alpha}(M)$$
and $\mbox{div}\hspace{0.1cm}\mbox{grad}\mathcal{V}_{\alpha}(M)=-\alpha^{2}\mathcal{V}_{\alpha}(M)$
gives immediately
\begin{equation}\label{s56}
\mathcal{V}_{\alpha}(\mbox{div}_{T}\hspace{0.1cm}\textbf{J})-\alpha^{2}\mathcal{V}_{\alpha}(M)=0.
\end{equation}

\section{FE/BE coupling}
\label{sec:s3}

Next we present a coupling method for the interface problem $(P_{\alpha\beta})$ (see \cite{Ammari,Ammari1,Hitmair, Hitmair1,Ospino}). Integration by parts gives in $\Omega_{-}$ for the first equation in $(P_{\alpha\beta})$ with $\gamma_{N}\textbf{E}=(\mbox{curl}\hspace{0.1cm}\textbf{E})\times\textbf{n}$, $\gamma_{D}\textbf{E}=\textbf{n}\times(\textbf{E}\times\textbf{n})$
\begin{equation}
\int_{\Omega_{-}}\mbox{curl}\hspace{0.1cm}\textbf{E}\cdot\mbox{curl}\hspace{0.1cm}\overline{\textbf{v}}d\textbf{x}-\int_{\Omega_{-}}i\beta^{2}\textbf{E}\cdot\overline{\textbf{v}}d\textbf{x}-\int_{\Sigma}\gamma_{N}^{-}\textbf{E}\cdot\gamma_{D}^{-}\overline{\textbf{v}}ds=0.
\end{equation}
Therefore with $\gamma_{N}^{-}\textbf{E}=\gamma_{N}^{+}\textbf{E}+\gamma_{N}\textbf{E}^{0}$ and setting $\textbf{E}=\mathcal{V}_{\alpha}(\textbf{J})+\mbox{grad}\hspace{0.1cm}\mathcal{V}_{\alpha}(M)$ in $\Omega_{+}$ we obtain
\small{$$\int_{\Omega_{-}}\mbox{curl}\hspace{0.1cm}\textbf{E}\cdot\mbox{curl}\hspace{0.1cm}\overline{\textbf{v}}d\textbf{x}-\int_{\Omega_{-}}i\beta^{2}\textbf{E}\cdot\overline{\textbf{v}}d\textbf{x}-\int_{\Sigma}\gamma_{N}^{+}(\mathcal{V}_{\alpha}(\textbf{J})+\mbox{grad}\hspace{0.1cm}\mathcal{V}_{\alpha}(M))\cdot\gamma_{D}^{+}\overline{\textbf{v}}ds=\int_{\Sigma}\gamma_{N}\textbf{E}^{0}\cdot\gamma_{D}^{+}\overline{\textbf{v}}ds.$$}
Note that $\gamma_{N}^{+}(\mathcal{V}_{\alpha}(\textbf{J})+\mbox{grad}\hspace{0.1cm}\mathcal{V}_{\alpha}(M))=\dfrac{1}{2}\textbf{J}+\dfrac{1}{2}\textbf{K}_{\alpha}(\textbf{J})$ where $\textbf{K}_{\alpha}$ is a smothing operator.\\
As shown in \cite[Lemma 4.5]{MacCamyS} there exists a continuous map $J_{\alpha}(\textbf{J})_{T}$ from $\textbf{H}^{r}(\Sigma)$ into $H^{r+1}(\Sigma)$, for any real number $r$ with
\begin{equation}\label{b4}
\mbox{div}_{T}\hspace{0.1cm}\mathcal{V}_{\alpha}(\textbf{J})_{T}=\mathcal{V}_{\alpha}(\mbox{div}_{T}\hspace{0.1cm}\textbf{J})+J_{\alpha}(\textbf{J})_{T}.
\end{equation}
As shown in \cite{MacCamyP} the system of boundary operators on $\Sigma$ (which is equivalent to (\ref{s55}) and (\ref{s56}))
\begin{equation}\label{b5}
\begin{array}{ll}
 \mathcal{V}_{\alpha}(\textbf{J})_{T}+\mbox{grad}_{T}\hspace{0.1cm}\mathcal{V}_{\alpha}(M)&=-\textbf{E}^{0}_{T}\\\\
 -J_{\alpha}(\textbf{J})_{T}-(\Delta_{T}+\alpha^{2})\mathcal{V}_{\alpha}(M)&=\mbox{div}_{T}\hspace{0.1cm}\textbf{E}^{0}_{T}.
 \end{array}
\end{equation}
is strongly elliptic as a mapping from $\textbf{H}^{-\frac{1}{2}}(\Sigma)\times H^{\frac{1}{2}}(\Sigma)$ into $\textbf{H}^{\frac{1}{2}}(\Sigma)\times H^{-\frac{1}{2}}(\Sigma)$, where $\mbox{grad}_{T}(\mbox{div}_{T})$ denote the surface gradient (surface divergence) and $\Delta_{T}$ the Laplace-Beltrami operator on $\Sigma$.\\
Now, our fem/bem coupling method is based on the variational formulation: For given incident field $\textbf{E}^{0}$ on $\Sigma$ find $\textbf{E}\in\textbf{H}(\mbox{curl},\Omega_{-})$, $\textbf{J}\in\textbf{H}^{-\frac{1}{2}}(\Sigma)$ and $M\in H^{\frac{1}{2}}(\Sigma)$ with
\small{
\begin{equation}\label{b6}
\begin{aligned}
\int_{\Omega_{-}}\mbox{curl}\hspace{0.1cm}\textbf{E}\cdot\mbox{curl}\hspace{0.1cm}\overline{\textbf{v}}d\textbf{x}-\int_{\Omega_{-}}i\beta^{2}\textbf{E}\cdot\overline{\textbf{v}}d\textbf{x}-\dfrac{1}{2}\int_{\Sigma}(\textbf{J}+\textbf{K}_{\alpha}(\textbf{J}))\cdot\gamma_{D}^{+}\overline{\textbf{v}}ds=\int_{\Sigma}\gamma_{N}\textbf{E}^{0}\cdot\gamma_{D}^{+}\overline{\textbf{v}}ds\\\\
\int_{\Sigma}\mathcal{V}_{\alpha}(\textbf{J})_{T}\cdot\overline{\textbf{j}}\hspace{0.1cm}dS+\int_{\Sigma}\mbox{grad}_{T}\mathcal{V}_{\alpha}(M)\cdot\overline{\textbf{j}}\hspace{0.1cm}dS=-\int_{\Sigma}\textbf{E}_{T}^{0}\cdot\overline{\textbf{j}}\hspace{0.1cm}dS,\\\\
-\int_{\Sigma}J_{\alpha}(\textbf{J})_{T}\overline{m}\hspace{0.1cm}dS-\int_{\Sigma}(\Delta_{T}+\alpha^{2})\mathcal{V}_{\alpha}(M)\overline{m}\hspace{0.1cm}dS=\int_{\Sigma}\mbox{div}_{T}\hspace{0.1cm}\textbf{E}^{0}_{T}\overline{m}dS,
\end{aligned}
\end{equation}
}
$\forall\textbf{v}\in\textbf{H}(\mbox{curl},\Omega_{-})$, $\textbf{j}\in\textbf{H}^{-\frac{1}{2}}(\Sigma)$, $m\in H^{\frac{1}{2}}(\Sigma)$.\\
In order to formulate a conforming Galerkin scheme for (\ref{b6}) we take subspaces $\textbf{H}^{1}_{h}\subset\textbf{H}(\mbox{curl},\Omega_{-})$, $\textbf{H}^{-\frac{1}{2}}_{h}\subset\textbf{H}^{-\frac{1}{2}}(\Sigma)$, $H^{\frac{1}{2}}_{h}\subset H^{\frac{1}{2}}(\Sigma)$ with mesh parameter $h$ and look for $\textbf{E}_{h}\in\textbf{H}^{1}_{h}$, $\textbf{J}_{h}\in\textbf{H}^{-\frac{1}{2}}_{h}$, $M_{h}\in H^{\frac{1}{2}}_{h}$ such that
\begin{equation}\label{b7}
\langle\mathcal{A}(\textbf{E}_{h},\textbf{J}_{h},M_{h}),(\textbf{v}_{h},\textbf{j}_{h},m_{h})\rangle=\langle\mathcal{F},(\textbf{v}_{h},\textbf{j}_{h},m_{h})\rangle
\end{equation}
where $\mathcal{A}$ is the operator given by the left hand side in (\ref{b6}), $\mathcal{F}=(\gamma_{N}\textbf{E}^{0},-\textbf{E}^{0}_{T},\mbox{div}_{T}\hspace{0.1cm}\textbf{E}^{0}_{T})$.
\begin{theorem}\label{ts1}
\begin{enumerate}
 \item System (\ref{b6}) has a unique solution $(\textbf{E},\textbf{J},M)$ in $\textbf{X}=\textbf{H}(\mbox{curl},\Omega_{-})\times\textbf{H}^{-\frac{1}{2}}(\Sigma)\times H^{\frac{1}{2}}(\Sigma)$.
 \item The Galerkin system (\ref{b7}) is uniquely solvable in $\textbf{X}_{h}=\textbf{H}^{1}_{h}\times\textbf{H}^{-\frac{1}{2}}_{h}\times H^{\frac{1}{2}}_{h}$ and there exists $C>0$, independent of $h$,
\begin{equation}\label{b8}
\begin{aligned}
\|\textbf{E}-\textbf{E}_{h}\|_{\textbf{H}(\mbox{curl},\Omega_{-})}+\|\textbf{J}-\textbf{J}_{h}\|_{\textbf{H}^{-\frac{1}{2}}(\Sigma)}+\|M-M_{h}\|_{H^{\frac{1}{2}}(\Sigma)}\\\\
\leq C\inf_{(\textbf{v},\textbf{j},m)\in\textbf{X}_{h}}\left\lbrace \|\textbf{E}-\textbf{v}\|_{\textbf{H}(\mbox{curl},\Omega_{-})}+\|\textbf{J}-\textbf{j}\|_{\textbf{H}^{-\frac{1}{2}}(\Sigma)}+\|M-m\|_{H^{\frac{1}{2}}(\Sigma)}\right\rbrace
\end{aligned}
\end{equation}
where $(\textbf{E},\textbf{J},M)$ and $(\textbf{E}_{h},\textbf{J}_{h},M_{h})$ solve (\ref{b6})-(\ref{b7}) respectively.
\end{enumerate}
\end{theorem}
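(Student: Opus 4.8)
The plan is to recognize (\ref{b6}) as a saddle-point-type system of the form $\langle\mathcal{A}U,V\rangle=\langle\mathcal{F},V\rangle$ on $\textbf{X}=\textbf{H}(\mathrm{curl},\Omega_{-})\times\textbf{H}^{-\frac12}(\Sigma)\times H^{\frac12}(\Sigma)$ and to establish that $\mathcal{A}$ fits the framework of a compact perturbation of a strongly elliptic (coercive) operator, so that both parts of the theorem follow from the standard Fredholm/G\r{a}rding argument together with the abstract c\'ea-type estimate for strongly elliptic systems. First I would write out $\mathcal{A}$ blockwise: the $(1,1)$ block is the interior sesquilinear form $a(\textbf{E},\textbf{v})=\int_{\Omega_{-}}\mathrm{curl}\,\textbf{E}\cdot\mathrm{curl}\,\overline{\textbf{v}}-i\beta^{2}\int_{\Omega_{-}}\textbf{E}\cdot\overline{\textbf{v}}$; the lower-right $2\times2$ block is the boundary system (\ref{b5}), which by the cited result of \cite{MacCamyP} (quoted in the excerpt) is strongly elliptic as a map $\textbf{H}^{-\frac12}(\Sigma)\times H^{\frac12}(\Sigma)\to\textbf{H}^{\frac12}(\Sigma)\times H^{-\frac12}(\Sigma)$; and the coupling terms $-\tfrac12\int_{\Sigma}(\textbf{J}+\textbf{K}_{\alpha}(\textbf{J}))\cdot\gamma_{D}^{+}\overline{\textbf{v}}$ together with the single contribution of $\mathcal{V}_{\alpha}(M)$ against $\gamma_{D}^{+}\overline{\textbf{v}}$ (the off-diagonal $(1,2)$, $(1,3)$ and $(2,1)$, $(3,1)$ blocks) are, by Remark \ref{slem1} and the smoothing property of $\textbf{K}_{\alpha}$, bounded and — crucially — split into an antisymmetric principal part plus a compact remainder.

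Next I would verify a G\r{a}rding inequality for $\mathcal{A}$ on $\textbf{X}$. The interior form $a(\cdot,\cdot)$ satisfies $\mathrm{Re}\,e^{i\theta}a(\textbf{v},\textbf{v})\ge c\|\textbf{v}\|_{\textbf{H}(\mathrm{curl})}^{2}-C\|\textbf{v}\|_{L^{2}}^{2}$ for a suitable rotation angle $\theta$ (the $-i\beta^{2}$ mass term is the reason a complex rotation, rather than a plain real part, is needed), and the $L^{2}(\Omega_{-})$ norm is compactly embedded; the boundary block contributes coercivity up to compact perturbation by the strong ellipticity from \cite{MacCamyP}; and the $\pm\tfrac12\textbf{J}$ coupling terms cancel in the real part because they enter the two off-diagonal positions with opposite sign (the single-layer trace $\gamma_{D}^{+}\mathcal{V}_{\alpha}(\textbf{J})$ is exactly what appears, up to compact terms, in the second equation of (\ref{b6})), leaving only the smoothing/compact pieces $\textbf{K}_{\alpha}$, $J_{\alpha}$ and the $\mathcal{V}_{\alpha}(M)$ coupling. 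Combining, $\mathcal{A}=\mathcal{A}_{0}+\mathcal{C}$ with $\mathcal{A}_{0}$ $\textbf{X}$-coercive (after the complex rotation) and $\mathcal{C}:\textbf{X}\to\textbf{X}'$ compact.

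With the G\r{a}rding inequality in hand, part 1 follows by Fredholm alternative once injectivity of $\mathcal{A}$ is shown: if $\mathcal{F}=0$ then the first equation says $\textbf{E}$ solves the interior Maxwell equation with Neumann-type data $\tfrac12(\textbf{J}+\textbf{K}_{\alpha}(\textbf{J}))$, while equations two and three force $(\textbf{E}^{0}_{T},\mathrm{div}_{T}\textbf{E}^{0}_{T})=0$ to be replaced by homogeneous data so that $(\textbf{J},M)$ generate, via (\ref{s54}), an exterior field $\textbf{E}_{+}=\mathcal{V}_{\alpha}(\textbf{J})+\mathrm{grad}\,\mathcal{V}_{\alpha}(M)$ with vanishing tangential trace; the transmission conditions then make $(\textbf{E},\textbf{E}_{+})$ a solution of the homogeneous problem $(\textbf{P}_{\alpha\beta})$, which by Remark \ref{rem1} (uniqueness, valid since (\ref{s47}) is assumed) is identically zero, whence $\textbf{E}=0$ and, by injectivity of the single-layer representation (again under (\ref{s47})), $\textbf{J}=0$, $M=0$. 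Part 2 is then the standard consequence: a strongly elliptic (G\r{a}rding) system with a uniquely solvable continuous problem has, for $h$ small enough, uniquely solvable conforming Galerkin equations satisfying the quasi-optimal estimate (\ref{b8}); I would invoke the abstract theorem (e.g. of Hildebrandt–Wienholtz / Stephan type for strongly elliptic operators) that yields exactly (\ref{b8}).

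\textbf{Main obstacle.} The delicate point is the G\r{a}rding inequality for the \emph{coupled} operator: one must show that the coupling terms between the volume unknown $\textbf{E}$ and the boundary unknowns $(\textbf{J},M)$ do not destroy coercivity. This hinges on identifying the exact algebraic cancellation of the $\pm\tfrac12\textbf{J}$ jump terms across the $(1,2)$ and $(2,1)$ blocks — i.e. that the Neumann trace $\gamma_{N}^{+}\mathcal{V}_{\alpha}(\textbf{J})$ appearing in the first equation is the adjoint, up to a compact operator, of the Dirichlet trace $\gamma_{D}^{+}\mathcal{V}_{\alpha}(\textbf{J})$ appearing in the second — and on checking that the mixed $\mathcal{V}_{\alpha}(M)$ term is genuinely order $\le -1$ relative to the energy pairing, hence compact. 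Getting the mapping properties and the sign of the principal symbol right (consistently with the $\textbf{H}^{-\frac12}\times H^{\frac12}$ versus $\textbf{H}(\mathrm{curl})$ scaling) is where the real work lies; the rest is bookkeeping.
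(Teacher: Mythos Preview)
Your overall architecture --- establish a G\r{a}rding inequality for $\mathcal{A}$, deduce Fredholmness, get injectivity from the physical uniqueness of $(\textbf{P}_{\alpha\beta})$, and then invoke the abstract Stephan--Wendland quasi-optimality result --- is exactly the paper's strategy. The divergence is in \emph{how} strong ellipticity is obtained, and there your argument has a genuine gap.

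You treat the coupling as if it were two-sided and look for a cancellation of $\pm\tfrac12\textbf{J}$ contributions between the $(1,2)$ and $(2,1)$ blocks. But inspect (\ref{b6}): the second and third equations are tested against $\textbf{j}$ and $m$ and contain \emph{no} occurrence of the interior field $\textbf{E}$ --- they are literally the exterior integral equations (\ref{b5}) with data $-\textbf{E}^{0}_{T}$, not $\gamma_{D}^{-}\textbf{E}$. Hence the $(2,1)$ and $(3,1)$ blocks of $\mathcal{A}$ are zero; the system is block \emph{upper} triangular. The term $-\tfrac12\int_{\Sigma}\textbf{J}\cdot\gamma_{D}^{+}\overline{\textbf{v}}$ in the first line has no partner to cancel against, and it is of principal order (it pairs $\textbf{H}^{-\frac12}(\Sigma)$ with the trace of $\textbf{H}(\mathrm{curl},\Omega_{-})$), so it cannot be thrown into the compact remainder. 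Your ``main obstacle'' paragraph is therefore aimed at the wrong mechanism.

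The paper handles this by working at the level of pseudodifferential principal symbols rather than by manipulating the sesquilinear form directly. Using $\Delta\textbf{E}=\mathrm{curl}\,\mathrm{curl}\,\textbf{E}-\mathrm{grad}\,\mathrm{div}\,\textbf{E}$ with $\mathrm{div}\,\textbf{E}=0$, the symbol $\sigma(\mathcal{A})(\xi)$ is written out as a $6\times 6$ matrix which is visibly block upper triangular: the upper-left $3\times 3$ block is $(|\xi|^{2}+\xi_{3}^{2})I$, the lower-right $3\times 3$ block is the strongly elliptic symbol of (\ref{b5}) from \cite{MacCamyP}, and the only nonzero off-diagonal entries sit in the upper-right corner. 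Strong ellipticity of a triangular symbol with strongly elliptic diagonal blocks is immediate (equivalently, one can rescale by a diagonal $\Theta$ to dominate the one-sided coupling), so G\r{a}rding follows without any cancellation argument. Uniqueness is then assumed via the non-eigenvalue hypothesis on $(\alpha,\sqrt{i}\beta)$, and (\ref{b8}) comes from \cite{StephanW}. If you want to repair your variational proof, the fix is not a cancellation but a weighting: scale the boundary equations by a large constant before adding to the interior form, so that the coercivity of the $2\times 2$ boundary block absorbs the one-sided $-\tfrac12\langle\textbf{J},\gamma_{D}\textbf{v}\rangle$ term via Young's inequality.
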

\begin{proof}
First we note that system (\ref{b6}) is strongly elliptic in $\textbf{X}$ which follows by considering $\mathcal{A}$ as a system of pseudodifferential operators (cf. \cite{MacCamyP}). The only difference to \cite{MacCamyP} is that here we have additionally the first equation in (\ref{b6}). If we note $\Delta\textbf{E}=\mbox{curl}\mbox{curl}\textbf{E}-\mbox{grad}\mbox{div}\textbf{E}$ and take $\mbox{div}\textbf{E}=0$ we have that the principal symbol of $\mathcal{A}$ has the form (with $|\xi|^{2}=\xi_{1}^{2}+\xi_{2}^{2}$)
\begin{equation}\label{b9}
\sigma(\mathcal{A})(\xi)(\textbf{E},\textbf{J},M)^{t}=\left(
\begin{array}{cccccc}
|\xi|^{2}+\xi^{2}_{3} & 0 & 0 & 1 & 0 & 0\\\\
0 & |\xi|^{2}+\xi^{2}_{3} & 0 & 0 & 1 & 0\\\\
0 & 0 & |\xi|^{2}+\xi^{2}_{3} & 0 & 0 & 0\\\\
0 & 0 & 0 & \dfrac{1}{|\xi|} & 0 & i\xi_{1}\dfrac{1}{|\xi|}\\\\
0 & 0 & 0 & 0 & \dfrac{1}{|\xi|} & i\xi_{2}\dfrac{1}{|\xi|}\\\\
0 & 0 & 0 & 0 & 0 & |\xi|
\end{array}\right)\left( \begin{array}{c}
\\
E_{1}\\
\\
E_{2}\\
\\
E_{3}\\
\\
J^{1}\\
\\
J^{2}\\
\\
M\\
\\
\end{array}
\right)
\end{equation}
where $(E_{1},E_{2})=\textbf{E}_{T}$ and $E_{3}$ is perpendicular to $x_{3}=0$.\\
Obviously the two subblocks are strongly elliptic (see \cite{MacCamyP} for the lower subblock). Assuming that $(\alpha,\sqrt{i}\beta)$ is not an eigenvalue of $P_{\alpha\beta}$ we have existence and uniqueness of the exact solution. Due to the strong ellipticity of $\mathcal{A}$ there exists a unique Galerkin solution and the a priori error estimate holds due to the abstract results by Stephan and Wendland \cite{StephanW}.
\end{proof}

\section{Galerkin procedure for the perfect conductor problem ($P_{\alpha\infty}$)}
\label{sec:s4}

Next we consider the implementation of the Galerkin methods (see \cite{Christiansen,Ospino,Taskinen,Weggler}) and present corresponding numerical experiments for the integral equations (\ref{s55}) and (\ref{s56}). These experiments are performed with the program package \textit{Maiprogs}, cf. Maischak \cite{Mattias2,Mattias3}, which is a Fortran-based program package used for finite element and boundary element simulations \cite{Mattias4}. Initially developed by M. Maischak, \textit{Maiprogs} has been extended for electromagnetics problem by Teltscher \cite{Teltscher} and Leydecker \cite{Leydecker}.\\
We will investigate the exterior problem $(P_{\alpha\infty})$ by performing the integral equations procedure (\ref{s55}) and (\ref{s56}):\\

Testing against arbitrary functions $\textbf{j}\in
\textbf{H}^{-\frac{1}{2}}(\Sigma)$ and $m\in H^{\frac{1}{2}}(\Sigma)$ in
(\ref{s55}) and (\ref{s56}), we get
\begin{equation}\label{eg2}
\begin{aligned}
\int_{\Sigma}\mathcal{V}_{\alpha}(\textbf{J})_{T}\cdot\overline{\textbf{j}}\hspace{0.1cm}dS+\int_{\Sigma}\mbox{grad}_{T}\mathcal{V}_{\alpha}(M)\cdot\overline{\textbf{j}}\hspace{0.1cm}dS&=-\int_{\Sigma}\textbf{E}_{T}^{0}\cdot\overline{\textbf{j}}\hspace{0.1cm}dS,\\\\
-\int_{\Sigma}\mathcal{V}_{\alpha}(\mbox{div}_{T}\textbf{J})\cdot
\overline{m}\hspace{0.1cm}dS+\alpha^{2}\int_{\Sigma}\mathcal{V}_{\alpha}(M)\cdot
\overline{m}\hspace{0.1cm}dS&=0.
\end{aligned}
\end{equation}
Partial integration in the second term of $(\ref{eg2})_{1}$
$$\int_{\Sigma}\mbox{grad}_{T}\mathcal{V}_{\alpha}(M)\cdot\overline{\textbf{j}}\hspace{0.1cm}dS=-\int_{\Sigma}\mathcal{V}_{\alpha}(M)\cdot\mbox{div}_{T}\overline{\textbf{j}}\hspace{0.1cm}dS$$
shows that the formulation (\ref{eg2}) is symmetric: By definition
of symmetric bilinear forms $a$, $c$, of the bilinear form $b$ and
linear form $\ell$ through
\begin{equation*}
\begin{aligned}
a(\textbf{J},\textbf{j}):&=\int_{\Sigma}\mathcal{V}_{\alpha}(\textbf{J})_{T}\cdot\overline{\textbf{j}}\hspace{0.1cm}dS,\\\\
b(\textbf{J},m):&=-\int_{\Sigma}\mathcal{V}_{\alpha}(\mbox{div}_{T}\textbf{J})\cdot \overline{m}\hspace{0.1cm}dS\\\\
&=-\int_{\Sigma}\mathcal{V}_{\alpha}(m)\cdot\mbox{div}_{T}\overline{\textbf{J}}\hspace{0.1cm}dS,\\\\
c(M,m):&=\alpha^{2}\int_{\Sigma}\mathcal{V}_{\alpha}(M)\cdot \overline{m}\hspace{0.1cm}dS,\\\\
\ell(\textbf{j}):&=-\int_{\Sigma}\textbf{E}_{T}^{0}\cdot\overline{\textbf{j}}\hspace{0.1cm}dS
\end{aligned}
\end{equation*}
the variational formulation has the form: Find $(\textbf{J},M)\in
\textbf{H}^{-\frac{1}{2}}(\Sigma)\times H^{\frac{1}{2}}(\Sigma)$ such that
\begin{equation}\label{eg3}
\begin{array}{l}
a(\textbf{J},\textbf{j})+b(\textbf{j},M)=\ell(\textbf{j})\\\\
b(\textbf{J},m)+c(M,m)=0
\end{array}
\end{equation}
for all $(\textbf{j},m)\in \textbf{H}^{-\frac{1}{2}}(\Sigma)\times
H^{\frac{1}{2}}(\Sigma)$.\\

We now proceed to finite dimensional subspaces
$\mathcal{R}_{h}\subset \textbf{H}^{-\frac{1}{2}}(\Sigma)$ of dimension $n$
and $\mathcal{M}_{h}\subset H^{\frac{1}{2}}(\Sigma)$ of dimension
$m$, and seek approximations $\textbf{J}_{h}\in\mathcal{R}_{h}$ and
$M_{h}\in\mathcal{M}_{h}$ for $\textbf{J}$ and $M$, such that
\begin{equation}\label{eg4}
\begin{array}{l}
a(\textbf{J}_{h},\textbf{j})+b(\textbf{j},M_{h})=\ell(\textbf{j}),\\\\
b(\textbf{J}_{h},m)+c(M_{h},m)=0
\end{array}
\end{equation}
for all $\textbf{j}\in\mathcal{R}_{h}$ and $m\in\mathcal{M}_{h}$.\\
Let $\{\bpsi_{i}\}_{i=1}^{n}$ be a basis of $\mathcal{R}_{h}$ and
$\{\varphi_{j}\}_{j=1}^{m}$ be a basis of $\mathcal{M}_{h}$.
$\textbf{J}_{h}$ and $M_{h}$ are of the forms
\begin{equation}\label{eg5}
\textbf{J}_{h}:=\sum_{i=1}^{n}\lambda_{i}\bpsi_{i}\hspace{0.2cm}\mbox{and}\hspace{0.2cm}M_{h}:=\sum_{j=1}^{m}\mu_{j}\varphi_{j}.
\end{equation}
Inserting (\ref{eg5}) in (\ref{eg4}) provides
\begin{equation}\label{eg6}
\begin{aligned}
\sum_{i=1}^{n}\lambda_{i}a(\bpsi_{i},\bpsi_{k})+\sum_{j=1}^{m}\mu_{j}b(\bpsi_{k},\varphi_{j})&=\ell(\bpsi_{k})\\\\
\sum_{i=1}^{n}\lambda_{i}b(\bpsi_{i},\varphi_{l})+\sum_{j=1}^{m}\mu_{j}c(\varphi_{j},\varphi_{l})&=0
\end{aligned}
\end{equation}
for all $\bpsi_{k}$ and $\varphi_{l}$, $1\leq k\leq n$, $1\leq l\leq
m$.\\
With matrices and vectors
\begin{equation}\label{eg7}
\begin{array}{l}
A:=(a(\bpsi_{i},\bpsi_{k}))_{i,k}\in\mathbb{C}^{n\times n},\\\\
B:=(b(\bpsi_{i},\varphi_{l}))_{i,l}\in\mathbb{C}^{n\times m},\\\\
C:=(c(\varphi_{j},\varphi_{l}))_{j,l}\in\mathbb{C}^{m\times m},\\\\
\blambda:=(\lambda_{i})_{i}\in\mathbb{C}^{n},\\\\
\bmu:=(\mu_{j})_{j}\in\mathbb{C}^{m},\\\\
\bell:=(\ell(\bpsi_{k}))_{k}\in\mathbb{C}^{n}.
\end{array}
\end{equation}
(\ref{eg6}) has also the form
\begin{equation}\label{eg8}
\left(\begin{array}{cc}
A&B^{t}\\
B&C
\end{array}\right)\left(\begin{array}{c}
\blambda\\
\bmu
\end{array}\right)=\left(\begin{array}{c}
\bell\\
0
\end{array}\right).
\end{equation}

We have considered with $\{\bpsi_{i}\}_{i=1}^{n}$ a basis of $\mathcal{R}_{h}$ and $\{\varphi_{j}\}_{j=1}^{m}$ a basis of $\mathcal{M}_{h}$. These functions, are chosen as piecewise polynomials. To win these bases, we consider suitable basis functions locally on the element of a grid, i.e. on each component grid.\\
If we start from a grid
$$\{\Sigma_{k}\}_{k=1}^{N}\hspace{0.3cm}\mbox{with}\hspace{0.3cm}\bigcup_{1\leq k\leq N}\Sigma_{k}=\Sigma$$
with $N$ elements, and let $\{\widehat{\bpsi}_{i}\}_{i=1}^{\widehat{n}}$ and $\{\widehat{\varphi}_{j}\}_{j=1}^{\widehat{m}}$ respectively bases on a square reference element $\widehat{\Sigma}$. The local basis functions on an element $\Sigma_{k}$ are each $\{\bpsi_{i}\}_{i=1}^{n_{k}}$ or $\{\varphi_{j}\}_{j=1}^{m_{k}}$.\\

It should therefore be calculated first
$$A:=(a(\bpsi_{j_{s}},\bpsi_{i_{z}}))_{i_{z},j_{s}}\in \mathbb{C}^{n\times n},$$
where $\bpsi_{j_{s}}$ or $\bpsi_{i_{z}}$ are the basics function of
$\mathcal{R}_{h}$ and
$$a(\bpsi_{j_{s}},\bpsi_{i_{z}})=\int_{\Sigma}\mathcal{V}_{\alpha}(\bpsi_{j_{s}})_{T}\cdot\bpsi_{i_{z}}\hspace{0.1cm}dS=\sum_{k=1}^{N}\int_{\Sigma_{k}}\mathcal{V}_{\alpha}(\bpsi_{j_{s}})_{T}\cdot\bpsi_{i_{z}}\hspace{0.1cm}dS,$$

We test each local basis function against any other local basis function and sum the result to the test value of the global basis functions, which include these local basis functions.\\
Let $I_{N}=\{1,\ldots,N\}$ the index set for the grid elements, $I_{\widehat{n}}=\{1,\ldots,\widehat{n}\}$ the index set for the basic functions on the reference element and $I_{n}=\{1,\ldots,n\}$ the index set for the global basis functions.\\
Let $\bzeta:I_{N}\times I_{\widehat{n}}\rightarrow I_{n}$ the mapping from local to global basis functions such that $\bzeta(k,i)=j$, if the local basis function $\bpsi_{k,i}$ component of the global basis function is $\bpsi_{j}$.\\
Let $\bzeta^{-1}$ the set of all pairs of $(k,j)$ with $\bzeta(k,j)=i$, then
$$\int_{\Sigma}\mathcal{V}_{\alpha}(\bpsi_{j_{s}})_{T}\cdot\bpsi_{i_{z}}\hspace{0.1cm}dS=\sum_{\substack{(k,i)\in\\\bzeta^{-1}(i_{z})}}\sum_{\substack{(l,j)\in\\\bzeta^{-1}(j_{s})}}\int_{\Sigma_{k}}\mathcal{V}_{\alpha}(\bpsi_{l,j})_{T}\cdot\bpsi_{k,i}\hspace{0.1cm}dS$$
$$=\sum_{\substack{(k,i)\in\\\bzeta^{-1}(i_{z})}}\sum_{\substack{(l,j)\in\\\bzeta^{-1}(j_{s})}}\int_{\Sigma_{k}}\int_{\Sigma_{l}}G_{\alpha}(\lvert\textbf{x}-\textbf{y}\rvert)(\bpsi_{l,j}(\textbf{y}))^{t}\cdot\bpsi_{k,i}(\textbf{x})\hspace{0.1cm}dS_{\textbf{y}}\hspace{0.1cm}dS_{\textbf{x}}.$$
We are dealing in this implementation with Raviart-Thomas basis
functions. The transformation of these functions requires a
Peano transformation
$\bpsi_{k,i}=\dfrac{1}{\lvert\mbox{det}\hspace{0.1cm}A_{k}\rvert}A_{k}\widehat{\bpsi}_{i}$. Thus, if $A_{k}=(\textbf{a}_{1},\textbf{a}_{2})$, $\mbox{det}A_{k}$ is calculated by $\mbox{det}A_{k}=(\textbf{a}_{1}\times\textbf{a}_{2})\cdot\dfrac{\textbf{a}_{1}\times\textbf{a}_{2}}{\lVert\textbf{a}_{1}\times\textbf{a}_{2}\rVert}$.
The Peano-transformation of the local basis functions to the basic
functions on the reference element then gives
\begin{equation}\label{eg9}
\begin{aligned}
I&=\sum_{\substack{(k,i)\in\\\bzeta^{-1}(i_{z})}}\sum_{\substack{(l,j)\in\\\bzeta^{-1}(j_{s})}}\int_{\Sigma_{k}}\int_{\Sigma_{l}}G_{\alpha}(\lvert\textbf{x}-\textbf{y}\rvert)(\bpsi_{l,j}(\textbf{y}))^{t}\cdot\bpsi_{k,i}(\textbf{x})\hspace{0.1cm}dS_{\textbf{y}}\hspace{0.1cm}dS_{\textbf{x}}\\\\
&=\sum_{\substack{(k,i)\in\\\bzeta^{-1}(i_{z})}}\sum_{\substack{(l,j)\in\\\bzeta^{-1}(j_{s})}}\int_{\widehat{\Sigma}}\int_{\widehat{\Sigma}}\dfrac{G_{\alpha}(\lvert\textbf{x}-\textbf{y}\rvert)}{\lvert\mbox{det}\hspace{0.1cm}A_{k}\cdot\mbox{det}\hspace{0.1cm}A_{l}\rvert}(\widehat{\bpsi}_{i}(\widehat{\textbf{x}}))^{t}(A_{k})^{t}\cdot
A_{l}\widehat{\bpsi}_{j}(\widehat{\textbf{y}})\hspace{0.1cm}dS_{\widehat{\textbf{y}}}\hspace{0.1cm}dS_{\widehat{\textbf{x}}}
\end{aligned}
\end{equation}
with $\textbf{x}=\textbf{a}_{k}+A_{k}\widehat{\textbf{x}}$ and $\textbf{y}=\textbf{a}_{l}+A_{l}\widehat{\textbf{y}}$, and referent element $\widehat{\Sigma}$.\\

The calculation of the integrals with Helmholtz kernel $G_{\alpha}$
is not exact. We consider the expansion of the Helmholtz kernel in a Taylor
series. There holds
\begin{equation*}
\begin{aligned}
\nonumber G_{\alpha}(\lvert\textbf{x}-\textbf{y}\rvert)  &=\dfrac{1}{4\pi}
\frac{e^{\alpha
i\lvert\textbf{x}-\textbf{y}\rvert}}{\lvert\textbf{x}-\textbf{y}\rvert}
&=\dfrac{1}{4\pi}\left[ \frac{1}{\lvert\textbf{x}-\textbf{y}\rvert}+\alpha i+\frac{(\alpha
i)^2}{2}\lvert\textbf{x}-\textbf{y}\rvert+\dots\right]
\end{aligned}
\end{equation*}
The first term  are singular for $\textbf{x}=\textbf{y}$ and the correspondly integral are treated by analytic evaluation in \textit{Maiprogs}, cf. Maischak \cite{Mattias1,Mattias2,Mattias3} , but the integrals of all other summands can be calculated sufficiently well by Gaussian quadrature.\\
We compute
\begin{equation}\label{eq0}
\begin{aligned}
b(\bpsi_{i_{z}},\varphi_{j_{s}})&= -\int_\Sigma
\mathcal{V}_{\alpha}(\nabla_{T}\cdot\bpsi_{i_{z}})\cdot\varphi_{j_{s}}
\hspace{0.1cm}dS\\\\
&=-\sum_{\substack{(k,i)\in\\\bzeta^{-1}_{\psi}(i_{z})}}\sum_{\substack{(l,j)\in\\\bzeta^{-1}_{\varphi}(j_{s})}}
\int_{\Sigma_l}\int_{\Sigma_k}
G_{\alpha}(\lvert\textbf{x}-\textbf{y}\rvert)\nabla_{T}\cdot\bpsi_{k,i}(\textbf{y})
\cdot\varphi_{l,j}(\textbf{x})
\hspace{0.1cm}dS_{\textbf{y}}\,\text{d}S_{\textbf{x}}.
\end{aligned}
\end{equation}
with $\bzeta^{-1}_{\psi}=\bzeta$ described above, and $\bzeta^{-1}_{\varphi}$, the analogously defined map for the basic functions of $\mathcal{M}_{h}$.\\
While a transformation of the scalar basis functions is not required, the transformation of the surface divergence of Raviart-Thomas elements is carried out by $\nabla_{T}\cdot\bpsi_{k,i}=\frac{1}{\lvert\mbox{det}A_{k}\rvert}\widehat{\nabla}\cdot\widehat{\bpsi}_{i}$ and we have
\begin{equation}\label{eg18a}
\begin{aligned}
b(\bpsi_{i_{z}},\varphi_{j_{s}})=-\sum_{\substack{(k,i)\in\\\bzeta^{-1}_{\psi}(i_{z})}}\sum_{\substack{(l,j)\in\\\bzeta^{-1}_{\varphi}(j_{s})}}
\int_{\widehat{\Sigma}}\int_{\widehat{\Sigma}}\dfrac{G_{\alpha}(\lvert\textbf{x}-\textbf{y}\rvert)}{\lvert\mbox{det}A_{k}\rvert}\widehat{\nabla}\cdot\widehat{\bpsi}_{k,i}(\widehat{\textbf{y}})
\cdot\widehat{\varphi}_{l,j}(\widehat{\textbf{x}})
\hspace{0.1cm}dS_{\widehat{\textbf{y}}}\,\text{d}S_{\widehat{\textbf{x}}}
\end{aligned}
\end{equation}
with $\textbf{y}=\textbf{a}_{k}+A_{k}\widehat{\textbf{y}}$ and $\textbf{x}=\textbf{a}_{l}+A_{l}\widehat{\textbf{x}}$.\\

The calculation of $c(\varphi_{i},\varphi_{j})$ is similar to the above-mentioned case.\\

The calculation of the right hand side  appears simple at first glance,
since there are no single layer potential terms. Howewere we must compute the right hand side with quadrature.\\

The quadrature of an integral over $\textbf{f}$ on the reference
element is determined by the quadrature points
$\widehat{\textbf{x}}_{x,y}$, and the associated weights
$w_{x,y}=w_{x}\cdot w_{y}$,  which are processed in $x$ and $y$
direction. We perform the two-dimensional quadrature as a combination of one-dimensional
quadratures in each $x$ and $y$ direction, and we use here the
weights from the already implemented one-dimensional quadrature
formula. With $\widetilde{n}_{x}$ quadrature points in
$x$-direction, and $\widetilde{n}_{y}$ quadrature points in
$y$-direction, then the quadrature formula reads:
\begin{equation}\label{eg24}
\mathcal{Q}_{\widehat{\Sigma}}(\textbf{f})=\sum_{i=1}^{\widetilde{n}_{x}}\sum_{j=1}^{\widetilde{n}_{y}}\textbf{f}(\widehat{\textbf{x}}_{i,j})\cdot
w_{i}w_{j}.
\end{equation}
The quadrature points on the square reference element and the corresponding
weights for Gaussian quadrature are implemented in \textit{Maiprogs} already. For triangular elements,
we use Duffy transformation.\\

We will now calculate the right hand side  in the Galerkin formulation, i.e.
the linear form $\ell$, applied to the bases functions $\bpsi_{i}$,
$i=1,\ldots,n$. The quadrature takes place on the reference
element. We decompose the global into local basis functions
and then use the Peano-transformation for the Raviart-Thomas functions. It is therefore
\begin{equation*}
\begin{aligned}
\ell(\bpsi_{i_{r}})&=-\int_\Sigma (\textbf{E}^0_{T}(\textbf{x}))^{t}
\cdot\bpsi_{i_{r}}(\textbf{x})\hspace{0.1cm}dS_{\textbf{x}}\\\\
&=-\sum_{\substack{(k,i)\in\\\zeta^{-1}(i_{r})}}
\int_{\widehat{\Sigma}} (\textbf{E}^0_{T}(\textbf{x}))^{t} \cdot
A_{k}\cdot\widehat{\bpsi}_{k,i}(\widehat{\textbf{x}})\hspace{0.1cm}dS_{\widehat{\textbf{x}}}
\end{aligned}
\end{equation*}
with $\textbf{x}=\textbf{a}_k+A_k\widehat{\textbf{x}}$. Applying
(\ref{eg24}) leads with
$\widetilde{n}_{x}=\widetilde{n}_{y}:=\widetilde{n}$ to
\begin{equation}\label{eg26}
\mathcal{Q}(\ell(\bpsi_{i}))=-\sum_{\substack{(k,i)\in\\\zeta^{-1}(i_{r})}}\sum_{i_{1}=1}^{\widetilde{n}}\sum_{i_{2}=1}^{\widetilde{n}}(\textbf{E}^0_{T}(\textbf{x}_{i_{1},i_{2}}))^{t}
\cdot
A_{k}\cdot\widehat{\bpsi}_{k,i}(\widehat{\textbf{x}}_{i_{1},i_{2}})\cdot
w_{i_{1}}w_{i_{2}}
\end{equation}
with $\textbf{x}_{i,j}=\textbf{a}_k+A_k\widehat{\textbf{x}}_{i,j}$.
As before, the task is carried out by looping through all grid
components, and the values are added to the entries for each of its
base function.\\
The electrical field can be calculated by
\begin{equation}\label{eg34}
\textbf{E}_{h}=\mathcal{V}_{\alpha}(\textbf{J}_{h})+\mbox{grad}\hspace{0.1cm}\mathcal{V}_{\alpha}(M_{h}).
\end{equation}
We have for the first term in (\ref{eg34}) with $(\ref{eg5})_{1}$
\begin{equation}\label{eg36}
\mathcal{V}_{\alpha}(\textbf{J}_{h})(\textbf{x})=\sum_{i=1}^{n}\lambda_{i}\int_{\Sigma}G_{\alpha}(\lvert\textbf{x}-\textbf{y}\rvert)\bpsi_{i}(\textbf{y})dS_{\textbf{y}}.
\end{equation}
Then using Peano-transformation we have
\begin{equation}\label{eg37}
\begin{aligned}
\mathcal{V}_{\alpha}(\bpsi_{i_{s}})(\textbf{x})&=\int_{\Sigma}G_{\alpha}(\lvert\textbf{x}-\textbf{y}\rvert)\bpsi_{i_{s}}(\textbf{y})dS_{\textbf{y}}\\\\
&=\sum_{\substack{(l,i)\in\\\bzeta^{-1}(i_{s})}}\int_{\widehat{\Sigma}}\dfrac{G_{\alpha}(\lvert\textbf{x}-\textbf{y}\rvert)}{\lvert\mbox{det}\hspace{0.1cm}A_{l}\rvert}
A_{l}\widehat{\bpsi}_{i}(\widehat{\textbf{y}})\hspace{0.1cm}dS_{\widehat{\textbf{y}}}.
\end{aligned}
\end{equation}
For the second term in (\ref{eg34}) we have with
\begin{equation}\label{eg42}
\begin{aligned}
\mbox{grad}\hspace{0.1cm}\mathcal{V}_{\alpha}(\varphi_{j_{z}})(\textbf{x})&=\sum_{\substack{(l,j)\in\\\bzeta^{-1}(j_{z})}}\int_{\widehat{\Sigma}}\mbox{grad}_{\textbf{x}}G_{\alpha}(\lvert\textbf{x}-\textbf{y}\rvert)\widehat{\varphi}_{j}(\widehat{\textbf{y}})\hspace{0.1cm}dS_{\widehat{\textbf{y}}}
\end{aligned}
\end{equation}
The calculation of $\textbf{H}_{T}^{\pm}$ is done as follows (compare Remark \ref{slem1} $(v)$)
\begin{equation}\label{eg43}
\textbf{H}_{T}^{\pm}=\left[ \textbf{n}\times\mbox{curl}\hspace{0.1cm}\mathcal{V}_{\alpha}(\textbf{J})\right]^{\pm}=\pm\dfrac{1}{2}\textbf{J}(\textbf{x})+\dfrac{1}{2}\textbf{n}(\textbf{x})\times\int_{\Sigma}\mbox{grad}_{\textbf{x}}G_{\alpha}(\lvert\textbf{x}-\textbf{y}\rvert)\times\textbf{J}(\textbf{y}) dS_{\textbf{y}}.
\end{equation}

\section{Numerical experiments}
\label{sec:s5}

\begin{example}
Here, we consider one example to test the implementation. As domain we take the cube $\Omega_{-}=[-2,2]^{3}$, and we now want to test the Galerkin method in (\ref{eg4}). We choose the wave number $\alpha=0.1$ (or $\alpha=0.5,1.5$), and the exact solution
\begin{equation}\label{eg29}
\textbf{J}=\dfrac{1}{8}\left( \begin{array}{c}
0\\
(1-x_{1})(1-x_{2})\cdot n_{3}\\
-(1-x_{1})(1-x_{2})\cdot n_{2}
\end{array}
\right)
\end{equation}
and
\begin{equation}\label{eg30}
M=\dfrac{1}{8\alpha^{2}}(x_{1}-1)\cdot n_{3}
\end{equation}
where $\textbf{n}=(n_{1},n_{2},n_{3})$ denotes the outer normal vector at a point on the surface $\Sigma=\cup_{k=1}^{6}\Sigma_{k}$. We can write each term of equation (\ref{s55}) as:
\begin{equation}\label{eg31}
\begin{aligned}
\mathcal{V}_{\alpha}(\textbf{J})_{T}(\textbf{x})&=\sum_{k=1}^{6}\int_{\Sigma_{k}}G_{\alpha}(\lvert\textbf{x}-\textbf{y}\rvert)(\textbf{J}_{k}(\textbf{y}))^{t}\hspace{0.1cm}dS_{\textbf{y}},
\end{aligned}
\end{equation}
and
\begin{equation}\label{eg32}
\begin{aligned}
\mbox{grad}_{T}\mathcal{V}_{\alpha}(M)_{T}(\textbf{x})&=\sum_{k=1}^{6}\mbox{grad}_{T}\int_{\Sigma_{k}}G_{\alpha}(\lvert\textbf{x}-\textbf{y}\rvert)M_{k}(\textbf{y})\hspace{0.1cm}dS_{\textbf{y}}.
\end{aligned}
\end{equation}
Then, from (\ref{s55}), (\ref{eg31}) and (\ref{eg32}) holds
\begin{equation}\label{eg33}
\textbf{E}_{T}=\sum_{k=1}^{6}\left( \int_{\Sigma_{k}}G_{\alpha}(\lvert\textbf{x}-\textbf{y}\rvert)(\textbf{J}_{k}(\textbf{y}))^{t}\hspace{0.1cm}dS_{\textbf{y}}+\mbox{grad}_{T}\int_{\Sigma_{k}}G_{\alpha}(\lvert\textbf{x}-\textbf{y}\rvert)M_{k}(\textbf{y})\hspace{0.1cm}dS_{\textbf{y}}\right) .
\end{equation}

We use different values of $\alpha$ for our investigation. In Table \ref{table1} we present the results of the errors in energy norm and $L2$-norm for $\alpha=0.1,0.5,1.5$ for the uniform $h$-version with polynomial degree $p=1$. In Figures \ref{figure1} and \ref{figure2} we compare the $h$-version with different $\alpha$. The exact norm is known by extrapolation for $\alpha=0.1$ is $\lvert C\rvert=8.580798$, for $\alpha=0.5$ is $\lvert C\rvert=1.6171534$, and for $\alpha=1.5$ is $\lvert C\rvert=1.8042380$. Here $C=Re\langle\textbf{E}_{T}^{0},\textbf{J}\rangle$ and $C_{h}=Re\langle\textbf{E}_{T}^{0},\textbf{J}_{h}\rangle$ (see \cite{Holm}). The exact $L2$-norm is known by extrapolation for $\alpha=0.1$ are $\lVert \textbf{J}\rVert_{L^2}=2.1066356$ and $\lVert M\rVert_{L^2}=81.9249906$, for $\alpha=0.5$ are $\lVert \textbf{J}\rVert_{L^2}=2.1977966$ and $\lVert M\rVert_{L^2}=3.9588037$ and for $\alpha=1.5$ are $\lVert \textbf{J}\rVert_{L^2}=2.3826646$ and $\lVert M\rVert_{L^2}=0.7763804$.\\

The convergence rate $\eta$ for $\alpha=0.1$ are for the energy norm $\eta_{C}=1.325363$, for $L^{2}$-norm $\eta_{\textbf{J}}=1.617988$ and $\eta_{M}=1.184964$. For $\alpha=0.5$ are for the energy norm $\eta_{C}=1.165255$, for $L^{2}$-norm $\eta_{\textbf{J}}=0.976440$ and $\eta_{M}=1.211619$ and for $\alpha=1.5$ are for the energy norm $\eta_{C}=1.552163$, for $L^{2}$-norm $\eta_{\textbf{J}}=0.174124$ and $\eta_{M}=0.295586$.\\

Let as compare our numerical convergence rates above for the boundary element methods obtained in the above example with the theoretical convergence rates predicted by Theorem~\ref{ts1}. Note that we have implemented the boundary integral equation system (\ref{s55}), (\ref{s56}) and note the strongly elliptic system (\ref{b5}), where convergence is garanteed due to Theorem \ref{ts1}. Nevertheless our experiments show convergence for the boundary element solution, but with suboptimal convergence rates. Theorem \ref{ts1} predicts (when Raviart-Thomas elements are used to approximate $\textbf{J}$ and piecewise linear elements to approximate $M$) a convergence rate of order $\eta=\frac{3}{2}$ in the energy norm for smooth solutions $\textbf{J}$ and $M$. Our computations depend on the parameter $\alpha$ which is a well-known effect with boundary integral equations where it may come to spurious eigenvalues diminishing the orders of the Galerkin approximations. Due to the cube $\Omega_{-}=[-2,2]^{3}$ the numerical solution might become singular near the edges and corners of $\Omega_{-}$; hence the Galerkin scheme converge suboptimally.

\end{example}
\vspace{1cm}
\begin{table}[bht]
\begin{center}
\begin{tabular}[h]{|c||l|c|c|c|c|c|c|}\hline
N & DOF  & $\lvert C\rvert$      & $\lvert C-C_{h}\rvert$  &  $\lVert\textbf{J}\rVert_{\textbf{L}^{2}}$  &  $\lVert M\rVert_{\textbf{L}^{2}}$  &  $\lVert\textbf{J}-\textbf{J}_{h}\rVert_{\textbf{L}^{2}}$  &  $\lVert M-M_{h}\rVert_{\textbf{L}^{2}}$ \\ \hline \hline
  &      &           &           & $\alpha=0.1$ &             &                      &                                                                           \\ \hline
1 & 144  & 8.502965  & 1.153119  &  2.085189  & 80.704374     & 0.299829             &                                  14.08929                                 \\ \hline
2 & 576  & 8.568451  & 0.460150  &  2.104369  & 81.690279     & 0.097681             &                                   6.196968                                \\ \hline
3 & 2304 & 8.578833  & 0.033717  &  2.106395  & 81.879637     & 0.031823             &                                   2.725645                                \\ \hline
4 & 9216 & 8.654072  & 0.073274  &  2.117002  & 83.123825     & 0.010367             &                                   1.198835                                \\ \hline
  &      &           &           & $\alpha=0.5 $  &           &                      &                                                                           \\ \hline
1 & 144  & 1.603519  & 0.209552  &  2.149511  & 3.8937090     & 0.458159             &                                   0.714952                                \\ \hline
2 & 576  & 1.614451  & 0.093436  &  2.185426  & 3.9467491     & 0.232851             &                                   0.308704                                \\ \hline
3 & 2304 & 1.616616  & 0.041661  &  2.194608  & 3.9565591     & 0.118342             &                                   0.133293                                \\ \hline
4 & 9216 & 1.617260  & 0.018576  &  2.198619  & 3.9592220     & 0.060145             &                                   0.057554                                \\ \hline
  &      &           &           & $\alpha=1.5$  &            &                      &                                                                           \\ \hline
1 & 144  & 1.774450  & 0.326497  &  2.350909  & 0.7243729     & 0.387707             &                                   0.279375                                \\ \hline
2 & 576  & 1.800799  & 0.111334  &  2.365011  & 0.7422644     & 0.343627             &                                   0.227618                                \\ \hline
3 & 2304 & 1.803838  & 0.037965  &  2.382843  & 0.7539064     & 0.304558             &                                   0.185450                                \\ \hline
4 & 9216 & 1.804284  & 0.012946  &  2.397906  & 0.7909461     & 0.269932             &                                   0.151093                                \\ \hline
\end{tabular}
\end{center}
\caption{\label{table1}Errors in $L^{2}$-norm and energy norm with respect to the degrees of freedom for $\alpha=0.1,0.5,1.5$.}
\end{table}
\begin{figure}[htb]
\centerline{\resizebox{8cm}{!}{\includegraphics*{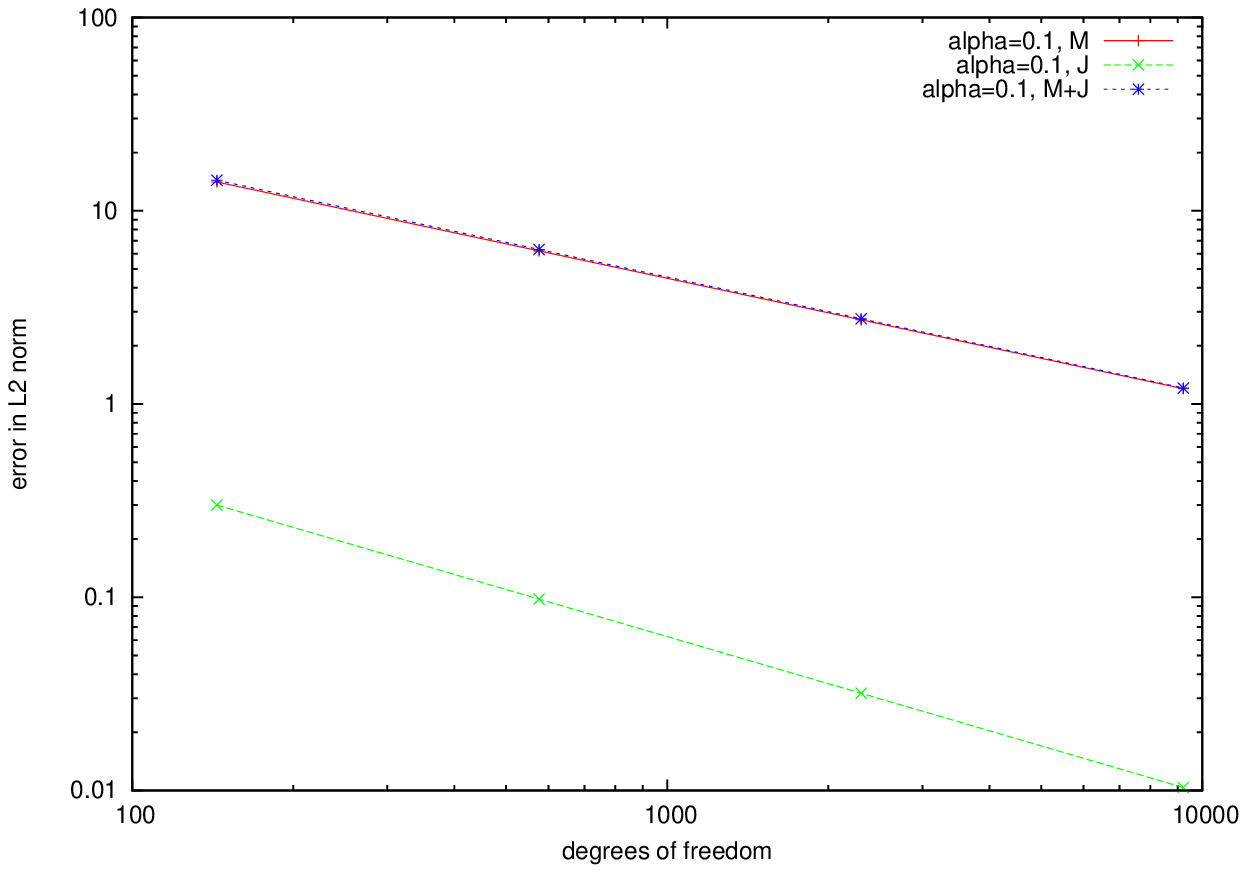}}\quad
\resizebox{8cm}{!}{\includegraphics*{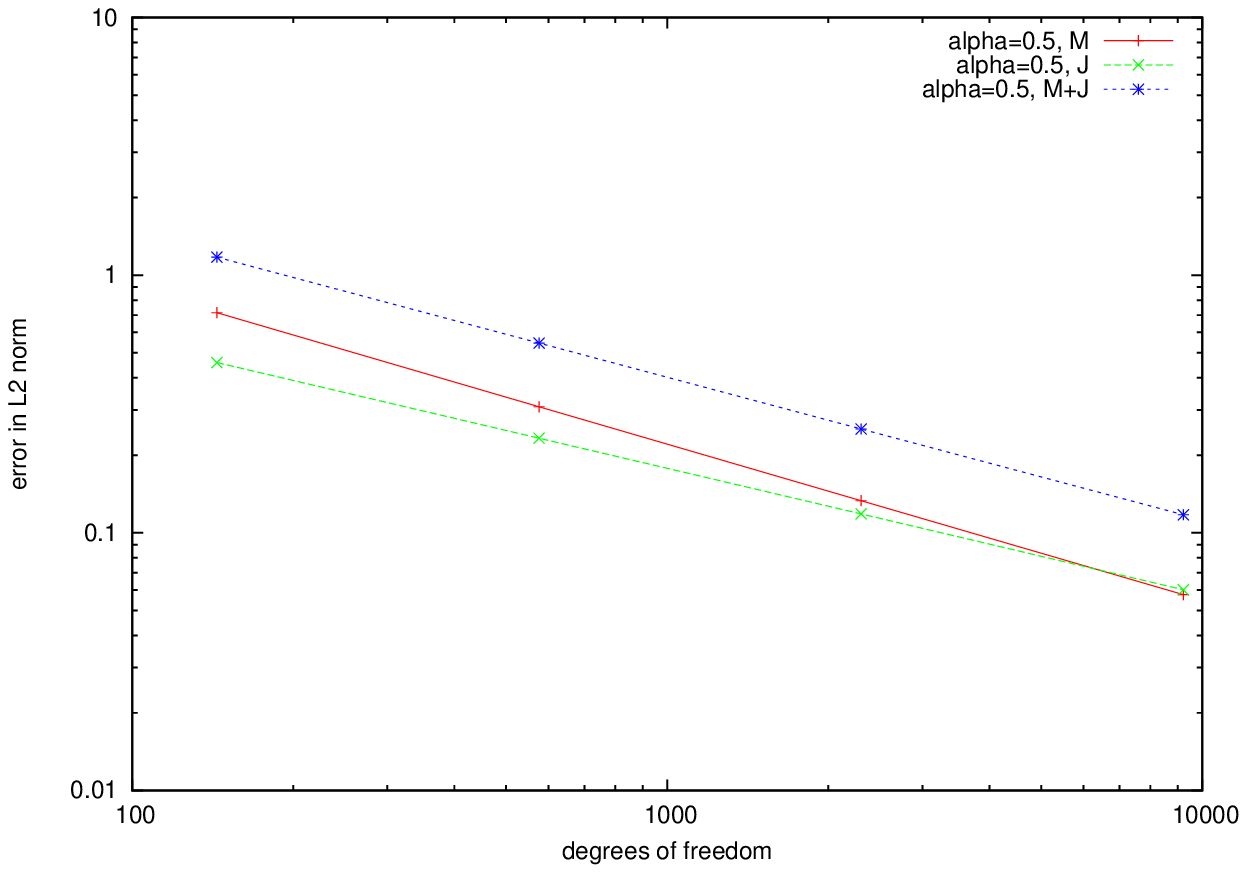}}}
\caption{\label{figure1}Errors in L2-norm for  $\alpha=0.1,0.5$}
\end{figure}
\begin{figure}[htb]
\centerline{\resizebox{8cm}{!}{\includegraphics*{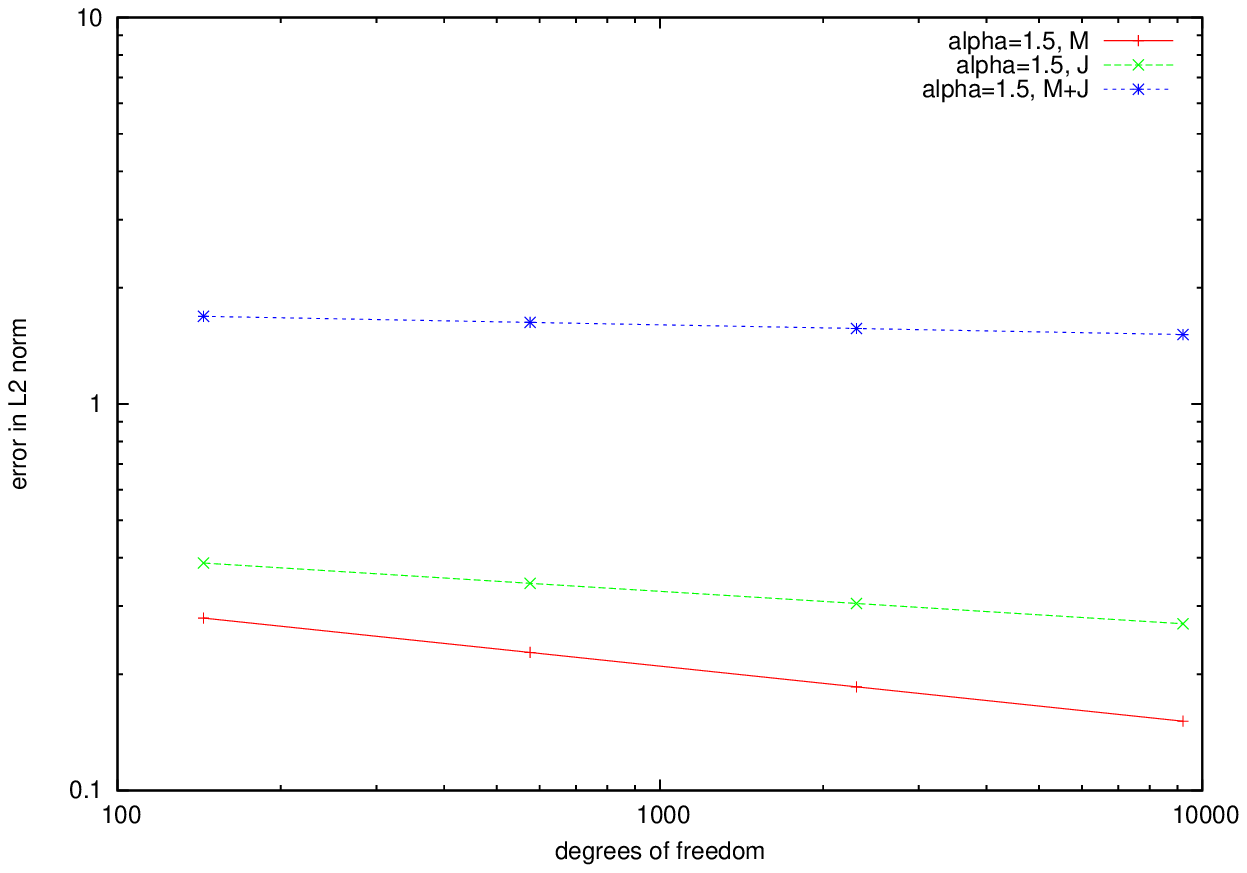}}\quad
\resizebox{8cm}{!}{\includegraphics*{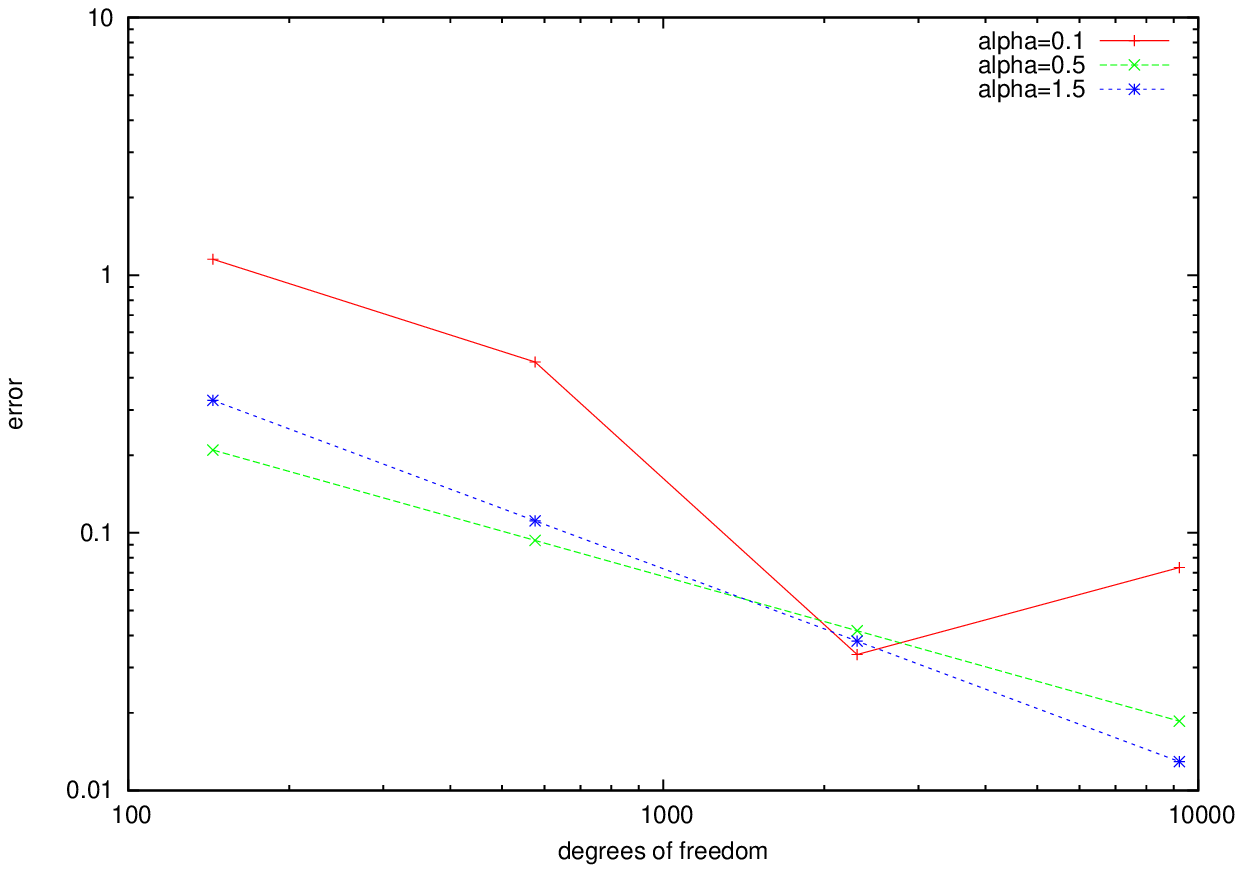}}}
\caption{\label{figure2}Errors in L2-norm for  $\alpha=1.5$ and energy norm $\lvert C-C_{h}\rvert=O(h^{\eta})$ for $\alpha=0.1,0.5,1.5$.}
\end{figure}
\newpage
Next, we apply the boundary element method above to compute the first terms in the asymptotic expansion of the electrical field considered in subsection \ref{sec:s1} (Remark \ref{rem1}). In this way we obtain good results for the electrical field at some point away from the transmission surface $\Sigma$ by only computing a few terms in the expansion.\\

Algorithm for the asymptotics of the eddy current problem:
\begin{enumerate}
 \item First solve the exterior Problem $(\textbf{P}_{\alpha \infty})$ by integral equations (\ref{s55}) and (\ref{s56}) i.e. (\ref{eg2}) with given incident field $-\textbf{E}_{T}^{0}$.
 \item Compute $\textbf{H}_{T}^{+}$ from (\ref{eg43}).
 \item Go back to 1: Solve the exterior problem $(\textbf{P}_{\alpha \infty})$ with new right hand side from (\ref{s31}).
 \item Go back to 2.
 \item $\textbf{E}=\textbf{E}_{0}+\beta^{-1}\textbf{E}_{1}+\beta^{-2}\textbf{E}_{2}+\textbf{R}_{m}$, where $\textbf{E}_{0}$ is the solution of the step 1 and $\textbf{E}_{1}$ and $\textbf{E}_{2}$ are solutions of step 3.
\end{enumerate}
We have $\widetilde{\textbf{E}}=\textbf{E}_{0}+\beta^{-1}\textbf{E}_{1}+\beta^{-2}\textbf{E}_{2}$ and calculate the error $\lvert\widetilde{\textbf{E}}-\textbf{E}_{\mbox{exact}}(\textbf{x}_{i})\rvert$, $i=1,2,3$, where $\textbf{x}_{1}=(3,0,0)$, $\textbf{x}_{2}=(6,0,0)$ and $\textbf{x}_{3}=(9,0,0)$. We present the results in Table \ref{table2} and in Figure \ref{figure3}.

\begin{table}[bht]
\begin{center}
\begin{tabular}[h]{|c||c||c||c|}\hline
DOF &  $\lvert\widetilde{\textbf{E}}-\textbf{E}_{\mbox{exact}}(\textbf{x}_{1})\rvert$ & $\lvert\widetilde{\textbf{E}}-\textbf{E}_{\mbox{exact}}(\textbf{x}_{2})\rvert$ & $\lvert\widetilde{\textbf{E}}-\textbf{E}_{\mbox{exact}}(\textbf{x}_{3})\rvert$ \\ \hline \hline
144  & 0.4959 & 0.6499 & 0.8049 \\ \hline
576  & 0.1043 & 0.0910 & 0.0347 \\ \hline
2304 & 0.0998 & 0.0067 & 0.0378 \\ \hline
\end{tabular}
\end{center}
\caption{\label{table2}Errors for electrical field in $\textbf{x}_{1}$, $\textbf{x}_{2}$, and $\textbf{x}_{3}$.}
\end{table}

\begin{figure}[ht]
\begin{center}
\includegraphics[width=8.5cm]{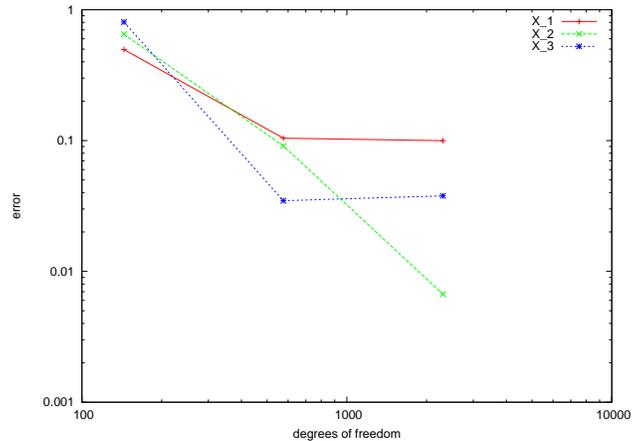}
\caption{\label{figure3}Errors for electrical field with respect to the degrees of freedom for $\textbf{x}_{1}$, $\textbf{x}_{2}$, and $\textbf{x}_{3}$.}
\end{center}
\end{figure}

\newpage

{\bf Acknowledgements:} This research was supported in part by the Progama ALECOL-DAAD, Institute for Applied Mathematics, Leibniz University of Hannover, Hannover-Germany, Department of Mathematics Sciences, Brunel University, U.K and Universidad del Norte, Barranquilla-Colombia. Also We thank the anonymous referees for their suggestions.\\

\end{document}